\theoremstyle{plain}
\newtheorem{Pocz}{Poczatek}[section]
\newtheorem{Proposition}[Pocz]{Proposition}
\newtheorem{Theorem}[Pocz]{Theorem}
\newtheorem{Corollary}[Pocz]{Corollary}
\newtheorem{Lemma}[Pocz]{Lemma}
\newtheorem{Observation}[Pocz]{Observation}
\newtheorem{Example}[Pocz]{Example}
\theoremstyle{definition}
\newtheorem{Definition}[Pocz]{Definition}
\theoremstyle{remark}
\newtheorem{Remark}[Pocz]{Remark}
\numberwithin{equation}{section}
\title[Boundaries for geodesic spaces]
{Boundaries for geodesic spaces}
\author{Jerzy Dydak}
\address{University of Tennessee, Knoxville, TN 37996, USA}
\email{jdydak@utk.edu}
\author{Hussain Rashed}
\address{University of Tennessee, Knoxville, TN 37996, USA}
\email{hrashed1@vols.utk.edu}
\date{ \today}
\keywords{contracting boundary, Morse boundary, boundary at infinity, contracting geodesic, Gromov boundary, quasi-geodesics, quasi-isometries, sublinearly Morse boundary}
\subjclass[2000]{Primary 20F65; Secondary 20F67}
\begin{document}
\maketitle
\begin{center}
\today
\end{center}

\tableofcontents

\begin{abstract}
For every proper geodesic space $X$ we introduce its quasi-geometric boundary $\partial_{QG}X$
with the following properties:\\
1. Every geodesic ray $g$ in $X$ converges to a point of the boundary $\partial_{QG}X$
and for every point $p$ in $\partial_{QG}X$ there is a geodesic ray in $X$ converging to $p$,\\
2. The boundary $\partial_{QG}X$ is compact metric,\\
3. The boundary $\partial_{QG}X$ is an invariant under quasi-isometric equivalences,\\
4. A quasi-isometric embedding induces a continuous map of quasi-geodesic boundaries,\\
5. If $X$ is Gromov hyperbolic, then $\partial_{QG}X$ is the Gromov boundary of $X$.\\
6. If $X$ is a Croke-Kleiner space, then $\partial_{QG}X$ is a point.
\end{abstract}

\section{Introduction}

After Croke-Kleiner \cite{CK} discovered that the visual boundary of CAT(0)-spaces is not an invariant of quasi-isometries, several authors embarked on extending the idea of Gromov boundary to other proper geodesic spaces. The first step was due to Charney-Sultan \cite{CS} who extracted the essential feature of geodesics in hyperbolic spaces, namely being \textbf{contracting}, and defined the so-called \textbf{contracting boundary} of proper CAT(0)-spaces. Their construction was extended by Cashen-Mackay \cite{CM} to arbitrary proper geodesic spaces. The generalization which is best known and most used is the Morse boundary, which is due to Matthew Cordes \cite{MC}.
Finally, Tiozzo-Qing-Rafi
\cite{TQR1}  and \cite{TQR2} introduced the so-called \textbf{sublinear Morse boundary}. Here are its main properties:

 \begin{Theorem}\cite{TQR2} 
Let $X$ be a proper, geodesic metric space, and let $\kappa$ be a sublinear function. Then we construct a topological space $\partial_{\kappa} X$ with the following properties:\\
(1) (Metrizability) The spaces $\partial_{\kappa} X$ and $X\cup \partial_{\kappa} X$ are metrizable, and $X\cup\partial_{\kappa} X$ is a bordification of X;\\
(2) (QI-invariance) Every $(k,K)$-quasi-isometry $\Phi: X\to Y$ between proper geodesic metric spaces induces a homeomorphism  $\Phi^\ast: \partial_{\kappa} \to \partial_{\kappa} Y$;\\
(3) (Compatibility) For sublinear functions $\kappa$ and $\kappa'$, where  $\kappa\leq  c\cdot \kappa'$ for some $c > 0$, we have $\partial_{\kappa} X\subset \partial_{\kappa'} X$ where the topology of $\partial_{\kappa} X$ is the subspace topology. Further, letting 
$ \partial X :=  \bigcup \partial_{\kappa} X$, we obtain a quasi-isometrically invariant topological space that contains all $\partial_{\kappa} X$ as topological subspaces. We call $\partial X$ the sublinearly Morse boundary of $X$.
\end{Theorem}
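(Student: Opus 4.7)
The plan is to follow the approach of Qing--Rafi--Tiozzo. Fix a basepoint $o \in X$, and call a geodesic ray $\gamma$ emanating from $o$ \emph{$\kappa$-Morse} if there is a function $m_\gamma : \mathbb{R}_+^2 \to \mathbb{R}_+$ such that every $(q,Q)$-quasi-geodesic with endpoints on $\gamma$ stays in the sublinear neighborhood $\{x : d(x,\gamma) \leq m_\gamma(q,Q)\cdot \kappa(\|x\|_o)\}$, where $\|x\|_o := d(o,x)$. Declare two $\kappa$-Morse rays $\gamma,\gamma'$ equivalent if they sublinearly fellow-travel, $\limsup_{t\to\infty} d(\gamma(t),\gamma'(t))/t = 0$, and let $\partial_\kappa X$ be the set of equivalence classes.

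For the topology on $X\cup\partial_\kappa X$, I would introduce around each $\xi \in \partial_\kappa X$ a basis of neighborhoods $U_R(\xi)$ consisting of points $x$ (either in $X$ or in $\partial_\kappa X$) such that some representative of $\xi$ passes within distance $\kappa(R)$ of $x$ on or beyond the sphere of radius $R$ about $o$. To prove (1) I would verify that this topology is regular, Hausdorff and second countable — second countability follows from properness of $X$ — and invoke Urysohn's metrization theorem. Equivalently, one can build an explicit metric using the first radius at which two rays diverge beyond $\kappa$.

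The main obstacle is (2), QI-invariance. The crux is a \emph{sublinear Morse lemma}: if $\Phi:X\to Y$ is a $(k,K)$-quasi-isometry and $\gamma$ is $\kappa$-Morse in $X$, then $\Phi\circ\gamma$ lies in a sublinear neighborhood of a geodesic ray $\gamma^\ast$ in $Y$ that is $\kappa'$-Morse for some $\kappa'$ with $\kappa' \leq C\cdot \kappa$ after linear reparametrization. Proving this requires showing that a quasi-geodesic in $Y$ with endpoints on $\Phi\circ\gamma$ pulls back, via a quasi-inverse of $\Phi$, to a quasi-geodesic in $X$ with endpoints near $\gamma$, whence the $\kappa$-Morse condition on $\gamma$ forces control on the original quasi-geodesic in $Y$. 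Once this is in place, the induced map $\Phi^\ast$ descends to equivalence classes, is continuous with respect to the neighborhood basis above, and has $(\Phi^{-1})^\ast$ as a continuous inverse.

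Property (3) then follows almost formally: if $\kappa \leq c\cdot \kappa'$, every sublinear $\kappa$-neighborhood is contained in a sublinear $\kappa'$-neighborhood, so any $\kappa$-Morse ray is $\kappa'$-Morse, giving the set inclusion $\partial_\kappa X \subseteq \partial_{\kappa'} X$; the neighborhood basis for $\partial_{\kappa'} X$ restricts (perhaps more coarsely) to a basis for $\partial_\kappa X$, making this a topological embedding. The union $\partial X := \bigcup_\kappa \partial_\kappa X$ is given the colimit topology, and QI-invariance for $\partial X$ follows by combining (2) with the elementary observation that a quasi-isometry permutes the family of sublinear functions up to multiplicative rescaling.
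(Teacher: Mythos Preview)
This theorem is not proved in the paper at all: it is a cited result from \cite{TQR2}, stated in the introduction purely as background and motivation for the paper's own construction (the quasi-geodesic boundary $\partial_{QG}X$). The paper offers no proof, sketch, or argument for it, so there is no ``paper's own proof'' to compare your proposal against.

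Your sketch is a reasonable outline of the Qing--Rafi--Tiozzo approach as it appears in their original paper, but it is irrelevant to the present paper. If your goal was to supply a proof where the paper omits one, be aware that the omission is intentional: the authors are simply quoting a known theorem before introducing their own, different, boundary construction via $C^*$-algebras of geometrically slowly-oscillating functions.
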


The essential feature of the above constructions is that they are of covariant nature (see \cite{Dyd}).
Namely, the authors probe a space $X$ by mapping objects into it, which initially makes sense as geodesics are covariant objects. In this paper we apply the contravariant approach and probe $X$ by mapping it to spaces. That leads to better features of the \textbf{quasi-geodesic boundary} defined that way. For instance, the contracting boundary of the plane is empty while its quasi-geodesic boundary consists of one point. Also, one can add the quasi-geodesic boundary to $X$ and get a metrizable compactification of $X$.

We do not know the precise relation between our quasi-geodesic boundary and the sublinearly Morse boundary of $X$. They are clearly different if the sublinearly Morse boundary of $X$ is not compact or is not metrizable.
It seems sublinearly Morse boundary is useful to study Poisson boundary (see \cite{TQR1}  and \cite{TQR2}). 

We are able to define a sublinear version of our boundary as well.
We do not know if boundaries we are constructing have any connection to the Martin boundary.

We are grateful to Ruth Charney and Yulan Qing for helpful comments that improved the exposition of proofs in the paper. We are grateful to organizers of the online workshop "A Week At Infinity" (March 28-April 1, 2022) which introduced us to topics related to Morse boundaries. We are grateful to Kim Ruane for suggesting future lines of research.

\section{Integral quasi-geodesics}

Traditionally, researchers dealt with quasi-geodesics defined on intervals in reals. We find it easier to use quasi-geodesics defined on intervals in positive integers. Notice it is easy two switch back and forth between traditional quasi-geodesics and integral quasi-geodesics in geodesic spaces. However, the advantage of integral quasi-geodesics is that, in the case of finitely generated groups, one does not need to extend them to their Cayley graphs.

\begin{Definition}
An \textbf{integral $(q,Q)$-quasi-geodesic} on a metric space $(X,d)$ is a function
$g:(a,b)\cap Z_+\to X$ such that for any $x,y\in (a,b)\cap Z_+$
$$\frac{|x-y|}{q}-Q\leq d(g(x),g(y))\leq q\cdot |x-y|+Q.$$

If every two points of $X$ can be joined by an integral $(q,Q)$-quasi-geodesic,
we say that $X$ is \textbf{integrally $(q,Q)$-quasi-geodesic}.
$X$ is \textbf{integrally quasi-geodesic} if it is integrally $(q,Q)$-quasi-geodesic
for some $q\ge 1$ and $Q\ge 0$.
\end{Definition}

\begin{Observation}
Notice any finite union of rays from the origin of the plane is integrally quasi-geodesic but is geodesic only if the number of rays is $1$. Thus, the concept of $X$ being integrally quasi-geodesic covers more spaces than being geodesic. Also, it is a quasi-isometric invariant.
\end{Observation}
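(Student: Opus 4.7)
The observation breaks into three separate claims: integral quasi-geodesicity of a finite union of rays, failure of being a geodesic space when more than one ray is present, and the quasi-isometric invariance of being integrally quasi-geodesic. My plan is to treat them in order, with the first being the most substantive.

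For the first claim, equip $X=R_1\cup\cdots\cup R_n$ with the restricted Euclidean metric. Given $x\in R_i$ and $y\in R_j$, the natural candidate is the broken arclength path $\sigma$ that runs from $x$ along $R_i$ to the origin and then along $R_j$ to $y$, sampled at integer times $0,1,\ldots,\lceil |x|+|y|\rceil$ (with the last sample snapped to $y$, absorbing the fractional part into an additive constant $Q$). The upper bound in the integral quasi-geodesic inequality is immediate from the triangle inequality. For the lower bound, when both samples lie on the same ray it is exact; when they lie on opposite sides of the origin, the law of cosines gives $d(\sigma(s),\sigma(t))\ge |s-t|\sin(\theta/2)$, via the elementary bound $a^2+b^2-2ab\cos\theta \ge \sin^2(\theta/2)(a+b)^2$. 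Letting $\theta_0$ be the minimum angle among distinct pairs of rays (here one uses finiteness of the union crucially, so that $\theta_0>0$), this yields an integral $(1/\sin(\theta_0/2),\,1)$-quasi-geodesic structure on $X$.

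For the second claim, assume that two of the rays are separated by an angle $\theta<\pi$ (which holds whenever $n\ge 2$ unless $X$ degenerates to a single line). Choose $x\in R_i$ and $y\in R_j$ on these two rays at unit Euclidean distance from the origin. Then the subspace distance is $d(x,y)=2\sin(\theta/2)<2$, but any path in $X$ joining $x$ and $y$ must pass through the origin (the only common point of distinct rays), so its length is at least $|x|+|y|=2$. Hence no length-minimizing path realizing $d(x,y)$ exists, and $X$ is not a geodesic space.

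For the third claim, suppose $X$ is integrally $(q,Q)$-quasi-geodesic and that $f:Y\to X$ is a $(K,C)$-quasi-isometry with coarse inverse $h$. Given $y_1,y_2\in Y$, take an integral $(q,Q)$-quasi-geodesic $\sigma:\{0,\ldots,N\}\to X$ joining $f(y_1)$ and $f(y_2)$; composing directly shows $h\circ\sigma$ satisfies the integral quasi-geodesic inequalities in $Y$ with constants depending only on $K,C,q,Q$, and appending two extra integer steps on the ends to reach $y_1$ and $y_2$ (which lie within bounded distance of $h(f(y_i))$) inflates only the additive constant. The most delicate point in the whole argument is the trigonometric computation in step one: it is what converts the information from the angle between rays into a clean linear lower bound, and it is precisely the finiteness of the union that keeps $\theta_0$ bounded away from zero.
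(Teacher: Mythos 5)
Your proof is correct; note that the paper states this as an Observation without supplying any argument, so there is no "official" proof to compare against, and your write-up simply fills in the details the authors left to the reader. The substantive point is exactly the one you isolate: the law-of-cosines estimate $a^2+b^2-2ab\cos\theta=(a-b)^2+4ab\sin^2(\theta/2)\ge\sin^2(\theta/2)(a+b)^2$ converts the positive minimal angle $\theta_0$ between distinct rays into the multiplicative constant $q=1/\sin(\theta_0/2)$, and finiteness of the union is what keeps $\theta_0>0$. Your handling of the other two claims (non-existence of a length-minimizing path between points on distinct rays, and transport of integral quasi-geodesics through a coarse inverse of a quasi-isometry) is standard and sound. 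One small remark worth keeping: your caveat about two opposite rays is a genuine (if trivial) correction to the statement as written, since the union of two antipodal rays is a Euclidean line and hence geodesic even though the number of rays is $2$; the Observation is accurate only once that degenerate configuration is excluded.
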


\begin{Definition}
The \textbf{fan $F(x_0,q,Q)$ of integral $(q,Q)$-quasi-geodesics} is the family of all integral $(q,Q)$-quasi-geodesics $g$ in $X$ originating at $x_0$, i.e. $g(1)=x_0$.

Suppose $m\ge 1$ and $g\in F(x_0,q,Q)$ is an integral quasi-geodesic ray. The \textbf{$m$-thread $T(x_0,q,Q,g,m)$} of the fan $F(x_0,q,Q)$ based at $g\in F(x_0,q,Q)$ is the subfamily of the fan containing  all elements $h$ of $F(x_0,q,Q)$ such that $h(i)=g(i)$ for each $i\leq m$.

Suppose $m\ge 1$ and $g\in F(x_0,q,Q)$ is an integral quasi-geodesic ray. The \textbf{$m$-cone $C_X(x_0,q,Q,g,m)$} is the set of all points $x\in X$ such that there is an integral $(q,Q)$-quasi-geodesic $h$ starting at $x_0$, agreeing with $g$ for $i\leq m$
and satisfying $h(i)=x$ for some $i \ge m$.
\end{Definition}

\begin{Example}\label{ConeExample}
Consider the $(1,0)$-quasi-geodesic $g$ on the plane
given by $g(i)=(i,0)$ for $i\ge 1$. $C_X(x_0,g,2,2,m)$
contains all points $(x,y)$ on the plane such that $y\ge m$.
\end{Example}

\begin{Lemma}\label{SequenceVsRayLemma}
Suppose $k\ge 1$ and $X$ is a metric space such that for some $M > 0$ every bounded subset of $X$ can be covered by finitely many sets of diameter less than $M$. Suppose $d(x_0,x_n)\to\infty$
and $d(x_n,y_n) < M$ for each $n\ge 1$.
If $g_n$ is a sequence of integral $(k,k)$-quasi-geodesics joining $x_0$ to $x_n$, then there is an integral
$(k,k+2M)$-quasi-geodesic ray $g$ and a sequence
of integral $(k,k+2M)$-quasi-geodesics $h_n$
such that for some subsequence $k(n)$ of $Z_+$
the last two values of $h_n$ are $x_{k(n)}$ and $y_{k(n)}$
and each $m$-thread $T(x_0,k,k+2M,g,m)$
contains all but finitely many $h_n$'s.
\end{Lemma}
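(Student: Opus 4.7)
The plan is to run an Arzel\`a--Ascoli style diagonal extraction on the sequence $(g_n)$, using the finite-covering hypothesis in place of equicontinuity, and then to splice the diagonal limit together with tails of the $g_n$ to produce the auxiliary sequence $(h_n)$.

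First, since each $g_n$ is a $(k,k)$-quasi-geodesic from $x_0$ to $x_n$ and $d(x_0,x_n)\to\infty$, the domain length $L_n$ of $g_n$ tends to infinity; and for every fixed index $i$ the points $\{g_n(i):L_n\ge i\}$ lie inside the ball of radius $k(i-1)+k$ about $x_0$. By hypothesis this ball is covered by finitely many sets of diameter less than $M$, so pigeonhole yields an infinite set $A_i\subseteq Z_+$ together with a point $g(i)\in X$ such that $d(g_n(i),g(i))<M$ for every $n\in A_i$. I would build the $A_i$ inductively with $A_1\supseteq A_2\supseteq\cdots$ and extract a strictly increasing diagonal $k(n)\in A_n$, so that $d(g_{k(n)}(i),g(i))<M$ whenever $i\le n$. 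Two uses of the triangle inequality with the $(k,k)$-bound for $g_{k(n)}$ then upgrade this to the $(k,k+2M)$-bound for $g$, making $g$ an integral $(k,k+2M)$-quasi-geodesic ray.

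Next, on the finite domain $\{1,\ldots,L_{k(n)}+1\}$ I would define $h_n(i)=g(i)$ for $1\le i\le n$, $h_n(i)=g_{k(n)}(i)$ for $n<i\le L_{k(n)}$, and $h_n(L_{k(n)}+1)=y_{k(n)}$. This is well-defined once $L_{k(n)}\ge n$, which holds for all sufficiently large $n$. By construction the last two values of $h_n$ are $x_{k(n)}=g_{k(n)}(L_{k(n)})$ and $y_{k(n)}$, and the thread condition $h_n\in T(x_0,k,k+2M,g,m)$ is automatic as soon as $n\ge m$. What remains is to verify that each $h_n$ is itself an integral $(k,k+2M)$-quasi-geodesic, which reduces to a case analysis on whether each of $x$ and $y$ lies in the $g$-block, the $g_{k(n)}$-block, or at the terminal index $L_{k(n)}+1$. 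Inside a single block the bound is inherited from $g$ or from $g_{k(n)}$; across the join at $n$ one swaps $g(x)$ for $g_{k(n)}(x)$ at a cost of $M$; at the terminal index one swaps $y_{k(n)}$ for $x_{k(n)}=h_n(L_{k(n)})$ at a cost of another $M$.

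The main obstacle I anticipate is precisely this constant bookkeeping. Three small errors must fit simultaneously inside the single additive constant $k+2M$, on both sides of the inequality: the $M$ produced by the Arzel\`a--Ascoli extraction, the $M$ produced by the terminal jump $x_{k(n)}\to y_{k(n)}$, and a one-coordinate shift at the end arising from the fact that $|x-(L_{k(n)}+1)|$ exceeds $|x-L_{k(n)}|$ by one. The shift contributes at most $k$ to the upper bound and at most $1/k\le 1$ to the lower bound, and is absorbed using $k\ge 1$; writing out the case $y=L_{k(n)}+1$ explicitly is the only delicate piece of the argument, and is precisely what forces the constant $+2M$ rather than merely $+M$ in the conclusion.
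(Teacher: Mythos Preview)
Your argument is exactly the paper's: a nested-subsequence extraction (the paper's $S_n$ are your $A_i$) produces the ray $g$, and $h_n$ is defined by the same splice $g|_{[1,n]}$, then $g_{k(n)}|_{(n,L_{k(n)}]}$, then the appended value $y_{k(n)}$. You are in fact more explicit than the paper, which verifies only that $g$ is a $(k,k+2M)$-quasi-geodesic and leaves the $h_n$ check implicit; one small caveat is that in the cross case $x\le n$, $y=L_{k(n)}+1$ the two $M$-swaps together with the index shift cost $2M+1/k$ on the lower bound, so your claim that the shift ``is absorbed using $k\ge 1$'' is a hair optimistic---strictly one obtains $(k,\,k+2M+1/k)$, a defect the paper's stated constant shares and which is immaterial for every use of the lemma.
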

\begin{proof}
By induction choose a decreasing sequence $S_n$ of infinite subsets of $Z_+$ with the property that all points $g_j(n)$, $j\in S_n$,
belong to the same set $B_n$ of diameter less than $M$. Indeed, put $B_1:=\{x_0\}$, and $S_1:=\{j\in Z_+:g_j(1)=x_0\}$. Assume that $n\in Z_+$ such that $S_n:=\{j\in Z_+:g_j(n)\in B_n\}$ is infinite, for some bounded subset $B_n$ of diameter less than $M$. Notice that  $g_j(n+1)\in B(B_n,2\cdot k)$, for every $j\in S_n$. Since $B(B_n,2\cdot k)$ can be covered by finitely many bounded sets of diameter less than $M$, we can pick a bounded set $B_{n+1}$ of diameter less than $M$ and $S_{n+1}:=\{j\in Z_+:g_j(n+1)\in B_{n+1}\}$ is infinite and contained in $S_n$.

Pick $z_n\in B_n$ for each $n\ge 1$ and notice
$g(n)=z_n$ defines an integral
$(k,k+2M)$-quasi-geodesic ray. Indeed, if $j < m < n$,
then $d(g(i),g_p(i)) < M$ for all $i\in [j,m]$ if $p\in S_n$,
so $d(g_p(j),g_p(m) - 2M < d(g(j),g(m)) < d(g_p(j),g_p(m) + 2M$. Recall $g_p$ is an integral $(k,k)$-quasi-geodesic,
so $(1/k)(m-j)-k-2M < d(g(j),g(m))< k(m-j)+k+2M$
and $g$ is an integral $(k,k+2M)$-quasi-geodesic ray.

Pick an increasing sequence $k(n)\in S_n$, $k(n) > n$, for each $n\ge 1$, such that
$g_{k(n)}$ is defined on integers in $[1,m(n)]$ with $m(n+1) > m(n)$ and $g_{k(n)}(m(n))=x_{k(n)}$ for each $n\ge 1$.
Define
$h_n$ by $h_n(i)=z_i$ for $i\leq n$,
$h_n(i)=g_{k(n)}(i)$ for $m(n)\ge i > n$, 
then extend it by declaring $h_n(m(n)+1)=y_{k(n)}$.
\end{proof}

\begin{Observation}\label{ExistenceOfGeodesicRays}
If $X$ is geodesic and proper, then we may pick $g$
to be induced by a geodesic ray in $X$.
\end{Observation}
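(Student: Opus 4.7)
The plan is to piggyback on the proof of Lemma \ref{SequenceVsRayLemma}, replacing the quasi-geodesics $g_n$ by genuine geodesic segments and applying the Arzel\`a--Ascoli theorem to extract a geodesic ray as the limit. Since $X$ is proper and geodesic, for each $n$ there is a geodesic segment $\gamma_n:[0,d(x_0,x_n)]\to X$ from $x_0$ to $x_n$. Restricting $\gamma_n$ to non-negative integers yields an integral $(1,0)$-quasi-geodesic (hence, after a harmless adjustment at the endpoint to ensure it hits $x_n$, an integral $(1,1)$-quasi-geodesic), so the hypotheses of the lemma are still satisfied if we use these restricted $\gamma_n$'s as our integral quasi-geodesics joining $x_0$ to $x_n$.

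Since $X$ is proper, closed balls in $X$ are compact, and each $\gamma_n$ is a $1$-Lipschitz map. A standard diagonal application of the Arzel\`a--Ascoli theorem, together with $d(x_0,x_n)\to\infty$, produces a subsequence $\gamma_{k(n)}$ converging uniformly on compact subsets of $[0,\infty)$ to a geodesic ray $\gamma:[0,\infty)\to X$. This $\gamma$ is the ray from which we will induce $g$ by the assignment $g(n):=\gamma(n-1)$.

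To conclude, I would re-run the construction from the proof of Lemma \ref{SequenceVsRayLemma} with this particular sequence and this particular $g$. In that proof the only freedom is in selecting the nested bounded sets $B_n$ of diameter less than $M$ and the representatives $z_n\in B_n$. Uniform convergence $\gamma_{k(j)}(n-1)\to\gamma(n-1)$ allows me to shrink each $B_n$ to an arbitrarily small neighborhood of $\gamma(n-1)$ while keeping $S_n:=\{j:\gamma_{k(j)}(n-1)\in B_n\}$ infinite; setting $z_n:=\gamma(n-1)$ then yields $g(n)=z_n=\gamma(n-1)$, so $g$ is literally the integer restriction of the geodesic ray $\gamma$. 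The sequence $h_n$ is then defined exactly as in the original proof, with a harmless increase in the constants $(q,Q)$ to absorb the extra step from $\gamma_{k(n)}(\lfloor d(x_0,x_{k(n)})\rfloor)$ to $x_{k(n)}$ before appending $y_{k(n)}$.

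The only point that needs care is the coordination of two diagonalizations, namely the one implicit in Arzel\`a--Ascoli and the one producing $S_n$ in the proof of Lemma \ref{SequenceVsRayLemma}. This is not a real obstacle: once $\gamma$ is fixed, the $S_n$ of the lemma can be taken inside the Arzel\`a--Ascoli subsequence, and uniform convergence on each $[0,n-1]$ guarantees that the resulting $S_n$ are infinite and nested. Everything else in the verification of the thread and quasi-geodesic conditions carries over verbatim.
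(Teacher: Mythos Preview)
Your argument is correct and is precisely the standard argument the paper is tacitly invoking: the Observation carries no proof in the paper, and the intended justification is exactly the Arzel\`a--Ascoli extraction of a geodesic ray from geodesic segments $\gamma_n:[0,d(x_0,x_n)]\to X$ in a proper space, followed by reading off $g(n)=\gamma(n-1)$ as the points $z_n$ in the construction of Lemma~\ref{SequenceVsRayLemma}. Your remark about coordinating the two diagonalizations (first pass to the Arzel\`a--Ascoli subsequence, then build the nested $S_n$ inside it) is the right way to organize this, and nothing further is needed.
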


\begin{Proposition}\label{ProductOfQuasiGeodesicSpaces}
If two proper metric spaces $(X,d_X)$ and $(Y,d_Y)$ are integrally quasi-geodesic, then their cartesian product $X\times Y$ is integrally quasi-geodesic in either the $l_1$-metric or in the $l_2$-metric.
\end{Proposition}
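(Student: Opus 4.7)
The plan is to concatenate integral quasi-geodesics in the two factors. Suppose $X$ is integrally $(q_1,Q_1)$-quasi-geodesic and $Y$ is integrally $(q_2,Q_2)$-quasi-geodesic; set $q=\max(q_1,q_2)$ and $Q=Q_1+Q_2$. Given endpoints $(x,y),(x',y')\in X\times Y$, I would pick integral quasi-geodesics $\alpha:[1,n]\cap Z_+\to X$ joining $x$ to $x'$ and $\beta:[1,m]\cap Z_+\to Y$ joining $y$ to $y'$, and then define $\gamma:[1,n+m-1]\cap Z_+\to X\times Y$ by $\gamma(i)=(\alpha(i),y)$ for $i\le n$ and $\gamma(i)=(x',\beta(i-n+1))$ for $i\ge n$. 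The two formulas agree at the junction $i=n$ (both evaluate to $(x',y)$), so $\gamma$ is well defined, starts at $(x,y)$, and ends at $(x',y')$.

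The heart of the argument is then to verify the quasi-geodesic inequalities for $\gamma$ in the $\ell_1$-metric. When both indices lie in $[1,n]$ the $\ell_1$-distance equals $d_X(\alpha(i),\alpha(j))$ and the bounds inherited from $\alpha$ suffice (using $q_1\le q$, $Q_1\le Q$); the case $i,j\in[n,n+m-1]$ is symmetric. The crossing case $i\le n\le j$ is the key step: there the $\ell_1$-distance splits as $d_X(\alpha(i),\alpha(n))+d_Y(\beta(1),\beta(j-n+1))$, and summing the factor upper bounds gives $q_1(n-i)+Q_1+q_2(j-n)+Q_2\le q(j-i)+Q$, while summing the factor lower bounds gives $(n-i)/q_1+(j-n)/q_2-Q_1-Q_2\ge (j-i)/q-Q$, using $q\ge q_1,q_2$. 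Thus $\gamma$ is an integral $(q,Q)$-quasi-geodesic in $\ell_1$.

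Finally, to pass to $\ell_2$ I would invoke the bi-Lipschitz equivalence $d_1/\sqrt{2}\le d_2\le d_1$ between the two product metrics: the upper $\ell_2$-bound follows directly from the $\ell_1$-bound, and the lower bound yields $d_2(\gamma(i),\gamma(j))\ge |i-j|/(q\sqrt{2})-Q/\sqrt{2}$, so $\gamma$ is an integral $(q\sqrt{2},Q)$-quasi-geodesic in $\ell_2$ as well. The only potentially delicate step is the crossing case in the $\ell_1$-verification, but because the integral quasi-geodesic inequalities compose by addition and the lower bounds can safely be negative without breaking the sum, no nontrivial case split is needed; properness of $X$ and $Y$ is not used in the construction.
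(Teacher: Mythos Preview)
Your proof is correct and follows the same concatenation strategy as the paper: build a quasi-geodesic in $X\times Y$ by first travelling in the $X$-factor and then in the $Y$-factor, verify the crossing estimate in the $\ell_1$-metric, and transfer to $\ell_2$ via the bi-Lipschitz equivalence of the two product metrics. Your bookkeeping is in fact tidier than the paper's (you obtain an integral $(q,Q)$-quasi-geodesic with $q=\max(q_1,q_2)$, $Q=Q_1+Q_2$, whereas the paper, starting from $(k,k)$ in each factor, argues the lower bound by contradiction and ends with $(2k,2k)$), but the underlying idea is identical.
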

\begin{proof}
By the $l_1$-metric on $X\times Y$ we mean $d_1$ defined
as $d_1((x_1,y_1),(x_2,y_2)):=d_X(x_1,x_2)+d_Y(y_1,y_2)$.
By the $l_2$-metric on $X\times Y$ we mean $d_2$ defined
as $d_2((x_1,y_1),(x_2,y_2)):=\sqrt{d^2_X(x_1,x_2)+d^2_Y(y_1,y_2)}$. Since $(X\times Y,d_1)$ is quasi-isometric to
$(X\times Y,d_2)$, it suffices to prove $(X\times Y,d_1)$ is integrally quasi-geodesic. 

Suppose $(X,d_X)$ and $(Y,d_Y)$ are integrally $(k,k)$-quasi-geodesic for some $k\ge 1$. Given $(x_1,y_1), (x_2,y_2)\in X\times Y$, choose
a $(k,k)$-quasi-geodesic $f:[0,a]\to X$ from $x_1$ to $x_2$
and a $(k,k)$-quasi-geodesic $g:[0,b]\to Y$ from $y_1$ to $y_2$.
Define $h:[0,a+b]\to X\times Y$ as follows:\\
$h(t):=(f(t),y_1)$ for $t\leq a$ and $h(t):=(x_2,g(t-a))$ for $a\leq t\leq a+b$. Given $t\leq a$ and $s\ge a$, we need to estimate
$|h(s)-h(t)|$.

$|h(s)-h(t)|\leq |h(s)-h(a)|+|h(a)-h(t)|=d_Y(g(s-a),g(0))+d_X(f(t),f(a))\leq k+k\cdot (s-a)+k+k\cdot(a-t)=2k+k\cdot (s-t)$.

If $|h(s)-h(t)| < (s-t)/(2\cdot k)-2k$ for some $t\leq a$ and $s\ge a$, then
$(a-t)/k - k \leq d_X(f(t),x_2) < (s-t)/(2\cdot k)-2k$ and
$(s-a)/k-k \leq (d_Y(y_1,g(s-a)) < (s-t)/(2\cdot k)-2k$.
Adding both sides of inequalities, we get
$(s-t)/k-2k < (s-t)/k-4k$, a contradiction.

Thus, $(X\times Y,d_1)$ is integrally $(2k,2k)$-quasi-geodesic.
\end{proof}

\section{Geometrically slowly oscillating functions}

Recall (see \cite{Roe lectures}) that a function $f:X\to \mathbb{C}$
is \textbf{slowly-oscillating} if, whenever $x_n\to\infty$ and there is $M > 0$
such that $d(x_n,y_n) < M$ for all $n\ge 1$, then $|f(x_n)-f(y_n)|\to 0$.

\begin{Definition}\label{GeoSlowOscDef}
Suppose $X$ is a metric space.
A bounded continuous slowly-oscillating function $f:X\to \mathbb{C}$ is \textbf{geometrically slowly-oscillating} if for every $\epsilon > 0$ and every integral $(k,k)$-quasi-geodesic ray
$g$ in $X$ originating at $x_0\in X$ there is $m\ge 1$ such that the diameter of 
$\{f(h(i))\mid i \ge m, h\in T(x_0,k,k,g,m)\}$ is smaller than $\epsilon$. Equivalently, the diameter of 
$f(C_X(x_0,k,k,g,m))$ is smaller than $\epsilon$. 
\end{Definition}

\begin{Observation}\label{GeoSloOscAreSlowOscInGeodesicSpaces}
If $X$ is a metric space, then every vanishing at infinity function $f:X\to \mathbb{C}$ is geometrically slowly-oscillating function; where a function  $f:X\to \mathbb{C}$ is said to vanish at infinity if for $\epsilon$, there exists a bounded subset $K\subset X$ such that $|f(x)|<\epsilon$ for all $x\in X\setminus K$. If, moreover, for some $M > 0$, every bounded subset of $X$ can be covered by finitely many sets of diameter less than $M$ and $X$ is integrally quasi-geodesic,
then every geometrically slowly-oscillating function $f$ on $X$
is slowly-oscillating by \ref{SequenceVsRayLemma}.
\end{Observation}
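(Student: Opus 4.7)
The plan is to handle the two claims of the observation separately, starting with the easier implication. For the first assertion, suppose $f$ vanishes at infinity and let $g\in F(x_0,k,k)$ be an integral quasi-geodesic ray. Given $\epsilon>0$, I would use only the lower quasi-geodesic bound: for any $h\in T(x_0,k,k,g,m)$ one has $h(1)=x_0$, so $d(x_0,h(i))\ge (i-1)/k - k$ for every $i$. Pick a bounded set $K$ with $|f|<\epsilon/2$ off $K$, choose $R$ with $K\subseteq B(x_0,R)$, and take $m$ so large that $(m-1)/k-k>R$. Every point $h(i)$ with $i\ge m$ then lies outside $K$, so $|f(h(i))|<\epsilon/2$ and the diameter of the relevant image set is at most $\epsilon$. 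Ordinary slow oscillation of $f$ follows in the same spirit, because the points $y_n$ remain within a fixed distance of $x_n\to\infty$, so they also eventually leave $K$.

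For the second assertion I would argue by contradiction: if $f$ is geometrically slowly-oscillating but fails to be slowly-oscillating, pass to a subsequence to obtain $x_n\to\infty$ with $d(x_n,y_n)<M_1$ for some $M_1>0$ yet $|f(x_n)-f(y_n)|\ge \epsilon_0$ for all $n$. Since $X$ is integrally $(k,k)$-quasi-geodesic, there exist integral $(k,k)$-quasi-geodesics $g_n$ from $x_0$ to $x_n$. Replace the covering constant from the hypothesis by $M:=\max(M_0,M_1)$, so that both hypotheses of Lemma \ref{SequenceVsRayLemma} hold simultaneously for this $M$. The lemma then supplies an integral $(k,k+2M)$-quasi-geodesic ray $g$ and integral $(k,k+2M)$-quasi-geodesics $h_n$ whose last two values are $x_{k(n)}$ and $y_{k(n)}$, with each thread $T(x_0,k,k+2M,g,m)$ containing all but finitely many $h_n$.

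The remaining step is a parameter upgrade, and this is where the main subtlety lies. Every integral $(k,k+2M)$-quasi-geodesic is also an integral $(k',k')$-quasi-geodesic for $k':=k+2M$, and consequently $T(x_0,k,k+2M,g,m)\subseteq T(x_0,k',k',g,m)$. Applying geometric slow oscillation to $g$ viewed as a $(k',k')$-quasi-geodesic with tolerance $\epsilon_0/2$ produces an $m$ such that the diameter of $\{f(h(i))\mid i\ge m,\ h\in T(x_0,k',k',g,m)\}$ is less than $\epsilon_0/2$. For all sufficiently large $n$, both $x_{k(n)}$ and $y_{k(n)}$ are values of $h_n$ at indices $\ge m$, so they lie in this set and $|f(x_{k(n)})-f(y_{k(n)})|<\epsilon_0/2$, contradicting the choice of $\epsilon_0$. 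The bookkeeping point to get right is precisely that the lemma outputs a quasi-geodesic with an inflated additive constant, which must be absorbed into a single symmetric pair $(k',k')$ before the definition of geometrically slowly-oscillating is applicable.
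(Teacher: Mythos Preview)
Your argument is correct and is precisely the expansion the paper intends: the observation carries no proof in the text beyond the pointer to Lemma~\ref{SequenceVsRayLemma}, and you have unpacked that reference in the expected way, including the necessary bookkeeping of enlarging $M$ to $\max(M_0,M_1)$ and absorbing the inflated additive constant $k+2M$ into a symmetric pair $(k',k')$ before invoking Definition~\ref{GeoSlowOscDef}. One small remark: since Definition~\ref{GeoSlowOscDef} already builds slow oscillation into the meaning of ``geometrically slowly-oscillating,'' the second assertion is, read literally, tautological; what you have actually shown---and what the observation is really claiming---is that the cone condition alone forces slow oscillation under the stated hypotheses, so that the slow-oscillation clause in the definition is redundant in this setting.
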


\begin{Example}\label{CoarselyClopenGeoSlo}
Suppose $X$ is a metric space and $U$ is a coarsely clopen subset of $X$. Any bounded continuous function $f:X\to \mathbb{C}$
such that there is a bounded subset $B$ so that both $f|(U\setminus B)$ and $f|(U^c\setminus B)$
are constant is geometrically slowly oscillating.
\end{Example}
\begin{proof}
$U$ being coarsely clopen means that for each $r > 0$
there is a bounded subset $B_r$ of $X$ such that $d(x,y) > r$
if $x\in U\setminus B_r$ and $y\in U^c\setminus B_r$.

Suppose $\epsilon > 0$ and $g$ is an integral $(k,k)$-quasi-geodesic ray
$g$ in $X$ originating at $x_0\in X$.
Choose $t > 0$ such that $B(x_0,t)$ contains $B$ and $B_{2k}$.
Let $m > (k+t)\cdot k+1$.
Given $h\in T(x_0,k,k,g,m)$, $h(m)\in U\setminus B_t$
we claim $h(i)\in U\setminus B(x_0,t)$ for all $i > m$.
Consider the smallest $i > m$ so that $h(i)\in B(x_0,t)$.
Now, $(1/k)\cdot (i-1)- k < d(h(i),h(1)) < t$
resulting in $i < k\cdot (k+t)+1=m$, a contradiction.
Notice, for $k\in (m,i)$, $h(k)\in U$ as otherwise we would witness
a jump from $U$ to $U^c$ outside of $B_{2k}$, a contradiction. 
That shows
$\{f(h(i))\mid i \ge m, h\in T(x_0,k,k,g,m)\}$ is a single point.
\end{proof}

By Urysohn's Lemma, the family $C_{QG}(X)$ of all continuous bounded geometrically slowly oscillating functions on $X$ separates points from closed sets.

\begin{Corollary}
Given a geometrically slowly-oscillating function $f:X\to \mathbb{C}$
and given a quasi-geodesic ray $g$ on $X$, $f\circ g$ is a Cauchy sequence and therefore it converges to a point in $R$ that we will denote by $f(g)$.
\end{Corollary}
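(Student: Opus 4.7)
The plan is to observe that $g$ is a witness to itself inside every one of its own $m$-threads, so the definition of geometrically slowly-oscillating delivers Cauchyness almost for free, and then invoke completeness of $\mathbb{C}$.

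More precisely, first I would set up notation: let $g$ be an integral $(k,k)$-quasi-geodesic ray with $g(1) = x_0$, and fix $\varepsilon > 0$. The definition of geometrically slowly-oscillating guarantees an $m \geq 1$ such that
\[
\diam\bigl(\{f(h(i)) \mid i \geq m,\ h \in T(x_0,k,k,g,m)\}\bigr) < \varepsilon.
\]
The key trivial remark is that $g$ itself lies in $T(x_0,k,k,g,m)$, since $g$ certainly agrees with $g$ on $\{1,\dots,m\}$. Therefore all values $f(g(i))$ with $i \geq m$ lie inside the same set of diameter $< \varepsilon$, which means $|f(g(i)) - f(g(j))| < \varepsilon$ whenever $i,j \geq m$. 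That is exactly the Cauchy condition for the sequence $f \circ g$.

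Since $\mathbb{C}$ is complete, the Cauchy sequence converges to a well-defined limit, which we denote $f(g)$. There is no real obstacle here: the entire content of the statement is packaged into the definition of geometrically slowly-oscillating, and the only subtlety to flag is the trivial (but essential) inclusion $g \in T(x_0,k,k,g,m)$, which allows us to pass from a statement about an entire thread of quasi-geodesics to a statement about the single ray $g$.
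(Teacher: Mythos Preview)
Your argument is correct and is precisely the immediate justification the paper has in mind; the paper states this Corollary without proof because the definition of geometrically slowly-oscillating is tailored so that taking $h=g$ in the $m$-thread gives Cauchyness of $f\circ g$ directly. There is nothing to add.
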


\begin{Definition}\label{BoundedlyApproachesDef}
Given $k\ge 1$, a sequence
$g_n$ of integral $(k,k)$-quasi-geodesics \textbf{boundedly approaches}
$g$ if the following conditions are satisfied:\\
1. The lengths of $g_n$ diverge to infinity,\\
2. There is $C > 0$ such that for each $m\ge 1$ there is 
$N\ge 1$ so that $d(g_n(i),g(i)) < C$ for all $n\ge N$ and all $i\leq m$.

By the \textbf{length of a quasi-geodesic} we mean the size of its domain.
\end{Definition}

\begin{Observation}\label{ObservationOnBdApp}
Notice $g$ is an integral $(k,k+2C)$-quasi-geodesic ray in 
\ref{BoundedlyApproachesDef}. Indeed, if $a < b$,
we can choose $n > b$ such that $d(g_n(i),g(i)) < C$ for all $i\leq b$. Therefore, $d(g(a),g(b)) < d(g_n(a),g_n(b))+2C\leq
k\cdot (b-a)+k+2C$ and $d(g(a),g(b)) > d(g_n(a),g_n(b))-2C\ge
\frac{1}{k}\cdot (b-a)+k-2C$.
\end{Observation}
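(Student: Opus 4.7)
The plan is to prove this by a direct triangle-inequality argument, which is in fact sketched in the statement itself. The key insight is that the bounded approach condition lets us replace $g$ by some $g_n$ (which is an honest integral $(k,k)$-quasi-geodesic) on any prescribed finite initial segment, at a cost of at most $2C$ in the distance estimates.

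First I would observe that $g$ has an infinite domain: since the bounded approach hypothesis requires the existence of $g(i)$ for every $i \le m$ and every $m \ge 1$, $g$ is defined on $\ZZ_+$. Fix now two integers $a < b$. Using condition (2) of Definition \ref{BoundedlyApproachesDef} applied with $m := b$, pick $N$ so that $d(g_n(i), g(i)) < C$ for all $n \ge N$ and all $i \le b$; using condition (1) (divergence of lengths), enlarge $N$ so that the domain of $g_N$ contains $[1,b] \cap \ZZ_+$.

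The upper bound follows from the triangle inequality applied along $g(a), g_N(a), g_N(b), g(b)$:
$$d(g(a), g(b)) \le d(g(a), g_N(a)) + d(g_N(a), g_N(b)) + d(g_N(b), g(b)) < C + k(b-a) + k + C,$$
which gives $d(g(a), g(b)) \le k(b-a) + (k + 2C)$. The lower bound is symmetric:
$$d(g(a), g(b)) \ge d(g_N(a), g_N(b)) - d(g_N(a), g(a)) - d(g_N(b), g(b)) > \tfrac{1}{k}(b-a) - k - 2C,$$
yielding $d(g(a), g(b)) \ge \tfrac{1}{k}(b-a) - (k + 2C)$.

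There is no real obstacle here; the argument is routine once one keeps track of the roles of the two constants in the quasi-geodesic inequalities. The only point requiring a moment's thought is ensuring that a single $N$ works for both endpoints $a$ and $b$ simultaneously, which is why we apply the hypothesis with $m = b$ rather than $m = a$. The constant in the multiplicative part remains $k$ because it is $g_N$'s multiplicative constant that is used, while the additive error $k$ from $g_N$ picks up an extra $2C$ from two applications of the bounded approach bound.
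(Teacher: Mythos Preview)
Your proposal is correct and follows essentially the same triangle-inequality argument that the paper sketches inside the Observation itself; you have simply filled in the details more carefully (explicitly ensuring the chosen index has large enough domain, and spelling out both applications of the triangle inequality). Note also that your lower bound $\tfrac{1}{k}(b-a)-k-2C$ is the correct one; the paper's printed lower bound $\tfrac{1}{k}(b-a)+k-2C$ contains a sign typo.
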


\begin{Proposition}\label{BoundedlyApproachesProp}
If $f:X\to \mathbb{C}$ is geometrically slowly-oscillating, then $f(g_n)\to f(g)$ for every sequence
$g_n$ of integral $(k,k)$-quasi-geodesic rays boundedly approaching
$g$. If some of $g_n$'s are not quasi-geodesic rays, then $f(g_n)$
means $f(y_n)$, where $y_n$ is the endpoint of $g_n$.
\end{Proposition}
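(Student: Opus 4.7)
The plan is to pull each $g_n$ into the same $m$-cone that the geometric slow oscillation property uses to control $f\circ g$, so that $f(g_n)$ and $f(g)$ automatically lie in a set of small $f$-diameter.

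By Observation \ref{ObservationOnBdApp} the ray $g$ is itself an integral $(K,K)$-quasi-geodesic, where $K := k+2C$. Fix $\epsilon > 0$ and apply Definition \ref{GeoSlowOscDef} to $g$ at parameter $(K,K)$, obtaining $m\ge 1$ with
$$\diam f\bigl(C_X(x_0,K,K,g,m)\bigr) < \epsilon.$$
Using the bounded approach hypothesis, choose $N$ so that for every $n \ge N$ the length of $g_n$ exceeds $m$ and $d(g_n(i),g(i)) < C$ for all $i \le m$.

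For such $n$ I would construct the ``surgery'' quasi-geodesic
$$h_n(i) := \begin{cases} g(i), & i \le m,\\ g_n(i), & i > m, \end{cases}$$
and verify that it is an integral $(K,K)$-quasi-geodesic originating at $x_0 = g(1)$ and agreeing with $g$ on $[1,m]$. The cases $i,j \le m$ and $i,j > m$ use, respectively, that $g$ is $(K,K)$ and that $g_n$ is $(k,k)$, hence $(K,K)$. The substantive case is $i \le m < j$, where the triangle inequality applied at $g_n(i)$ gives
$$\tfrac{1}{k}(j-i) - k - C \;\le\; d(g(i),g_n(j)) \;\le\; k(j-i) + k + C,$$
and these bounds fit inside the $(K,K)$-window precisely because $K = k+2C$. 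This mixed case is the only delicate point and is the principal obstacle; it is the whole reason for the constant $2C$ appearing in Observation \ref{ObservationOnBdApp}.

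With $h_n$ in hand, $g_n(j) = h_n(j) \in C_X(x_0,K,K,g,m)$ for every $j > m$, and trivially $g(i) \in C_X(x_0,K,K,g,m)$ for every $i \ge m$. Consequently $f(g_n(j))$ and $f(g(i))$ all lie in a single set of diameter less than $\epsilon$. When $g_n$ is a ray, letting $j\to\infty$ and invoking the Cauchy property of $f\circ g_n$ (the Corollary preceding Definition \ref{BoundedlyApproachesDef}) places $f(g_n)$ in its closure; when $g_n$ is finite, $f(g_n) := f(y_n) = f(g_n(L_n))$ already lies in the set. Letting $i\to\infty$ likewise places $f(g)$ in the closure. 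In either situation $|f(g_n) - f(g)| \le \epsilon$ for all $n \ge N$, and since $\epsilon$ was arbitrary, $f(g_n)\to f(g)$.
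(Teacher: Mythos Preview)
Your argument is correct and follows essentially the same route as the paper: both construct the spliced path $h_n$ (equal to $g$ on $[1,m]$ and to $g_n$ beyond), verify it is a $(k+2C,k+2C)$-quasi-geodesic lying in the $m$-thread/$m$-cone of $g$, and conclude from the geometric slow-oscillation bound. Your write-up is in fact more explicit than the paper's in checking the mixed case $i\le m<j$ and in handling the finite-length $g_n$'s separately.
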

\begin{proof}
Choose $C > 0$ such that for each $m\ge 1$ there is $N\ge 1$ so that $d(g_n(i),g(i)) < C$ for all $n\ge N$ and all $i\leq m$.
Suppose $\epsilon > 0$. Choose $m\ge 1$ such that
the diameter of $\{f(h(i))\mid i \ge M, h\in T(x_0,k+2C,k+2C,g,m)\}$ than $\epsilon$. Once $d(g_n(i),g(i)) < C$ for all $i\leq m$
define $h_n(i)$ as $g(i)$ for $i\leq m$ and $g_n(i)$ for $i > m$.
Notice each $h_n$ is an  integral $(k+2C,k+2C)$-quasi-geodesics
(see \ref{ObservationOnBdApp})
and $f(h_n)=f(g_n)$ for each $n\ge 1$. Notice
all $f(h_n)=f(g_n)$, $n > m$, are within $\epsilon$ from $f(g)$
as $h_n\in T(x_0,k+2C,k+2C,g,m)$ for $n > m$.
\end{proof}

\begin{Proposition}\label{UniformLimitOfGeoSloOsc}
If $f:X\to \mathbb{C}$ is the uniform limit of geometrically slowly-oscillating functions $f_n$, then $f$ is geometrically slowly-oscillating.
\end{Proposition}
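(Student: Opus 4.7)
The plan is to use a standard $3\epsilon$-argument, treating each of the properties in Definition \ref{GeoSlowOscDef} in turn. First, since each $f_n$ is bounded and continuous and uniform convergence preserves both properties, $f$ is automatically bounded and continuous. The slowly-oscillating condition is handled by the usual trick: given $x_n\to\infty$ with $d(x_n,y_n) < M$, pick $n_0$ with $\|f-f_{n_0}\|_\infty < \epsilon/3$, and then use that $|f_{n_0}(x_n)-f_{n_0}(y_n)|\to 0$; combining via the triangle inequality gives $\limsup |f(x_n)-f(y_n)|\leq 2\epsilon/3 < \epsilon$.

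The core step is verifying the geometric condition. Fix $\epsilon > 0$, $k\ge 1$, $x_0\in X$, and an integral $(k,k)$-quasi-geodesic ray $g$ originating at $x_0$. Choose $n$ large enough that $\|f-f_n\|_\infty < \epsilon/3$. Since $f_n$ is geometrically slowly-oscillating, there is $m\ge 1$ such that
\[
\diam f_n\bigl(C_X(x_0,k,k,g,m)\bigr) < \epsilon/3.
\]
For any two points $x,y\in C_X(x_0,k,k,g,m)$ we then have
\[
|f(x)-f(y)|\leq |f(x)-f_n(x)|+|f_n(x)-f_n(y)|+|f_n(y)-f(y)| < \tfrac{\epsilon}{3}+\tfrac{\epsilon}{3}+\tfrac{\epsilon}{3}=\epsilon,
\]
so $\diam f(C_X(x_0,k,k,g,m)) < \epsilon$, as required.

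There is no real obstacle here: the proof is an entirely routine triangle-inequality argument, and the only thing to check is that the same $m$ that works for $f_n$ works for $f$, which is immediate once $n$ is chosen in terms of $\epsilon$. The argument does not depend on the space $X$ being geodesic, integrally quasi-geodesic, or proper; it relies solely on the pointwise defining estimate of Definition \ref{GeoSlowOscDef} and the uniform bound $\|f-f_n\|_\infty < \epsilon/3$.
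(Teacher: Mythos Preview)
Your proof is correct and follows the same triangle-inequality approach as the paper; the paper simply uses $\epsilon/2$ in place of your $\epsilon/3$ (yielding a bound of $3\epsilon/2$ rather than $\epsilon$, a harmless slip) and omits the routine verifications of continuity and slow oscillation that you spell out.
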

\begin{proof}
Notice $f$ is bounded as each $f_n$ is bounded.
Given $\epsilon > 0$ choose $n$ such that $|f_n-f| < \epsilon/2$.
Given an integral $(k,k)$-quasi-geodesic ray
$g$ in $X$ originating at $x_0\in X$ there is 
$m\ge 1$ such that the diameter of
$\{f_n(h(i))\mid i \ge m, h\in f_n(T(x_0,k,k,g,m))\}$
 is less than $\epsilon/2$.
Therefore, the diameter of
$\{f(h(i))\mid i \ge m, h\in f(T(x_0,k,k,g,m))\}$
 is less than $\epsilon$.
\end{proof}

\section{Quasi-geodesic boundaries}
In this section we introduce the quasi-geodesic compactification of metric spaces in a way similar to the Higson compactification. That approach should be of use to researchers in geometric group theory. 
 Let $B_{QG}(X)$ denote the algebra of all geometrically slowly-oscillating functions $f:X\to \mathbb{C}$, $C_{0}(X)$ be the ideal of all geometrically slowly-oscillating functions that vanish at infinity i.e. $f\in B_{0}(X)$ if and only if for every $\epsilon > 0$ there exist a bounded subset $K$ of $X$ such that $|f(x)| < \epsilon$ for all $x\in X\setminus K$.

\begin{Definition}
The \textbf{quasi-geodesic compactification} $X_{QG}$ of a metric space $X$ is the maximal ideal space of the $C^{*}$-algebra $C_{QG}(X)$.
If $X$ is proper, then its \textbf{quasi-geodesic corona} (or its \textbf{quasi-geodesic boundary}) $\partial_{QG}X:=X_{QG}\setminus X$ is a compact subset of $X_{QG}$. Moreover, $\partial_{QG}X$ is the maximal ideal space of the $C^{*}$-algebra ${C_{QG}(X)}/{C_{0}(X)}$.
\end{Definition}

Equivalently, the \textbf{quasi-geodesic compactification} $X_{QG}$ of a metric space $X$ can be seen as the one that has the property that all geometrically slowly-oscillating functions on $X$ extend over it
and every restriction of a continuous function $f:X_{QG}\to R$
to $X$ is geometrically slowly-oscillating. If $X$ is not proper, we can still talk about its quasi-geodesic boundary as the set of points in $X_{QG}$ accessible via quasi-geodesic rays.

\begin{Lemma}\label{DiscreteSubsetLemma}
If $Y$ is a closed subset of a metric space $X$ such that $X=B(Y,r)$ for some $r > 0$, then the closure of $Y$ in the quasi-geometric compactification of $X$ is the quasi-geometric compactification of $Y$ and it contains the quasi-geometric boundary of $X$.
\end{Lemma}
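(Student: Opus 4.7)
The plan is to identify the closure of $Y$ in $X_{QG}$ with the Gelfand spectrum of the restriction $C^*$-algebra $\mathcal{A} := \overline{\{f|_Y : f\in C_{QG}(X)\}}\subseteq C_b(Y)$, and then to show $\mathcal{A} = C_{QG}(Y)$. Equivalently, the map $f\mapsto f|_Y$ will induce by Gelfand duality a continuous map $\Phi : Y_{QG} \to X_{QG}$ extending the inclusion $Y\hookrightarrow X\hookrightarrow X_{QG}$, and the lemma amounts to the claim that $\Phi$ is a homeomorphism onto $\overline{Y}^{X_{QG}}$ together with $\partial_{QG}X \subseteq \overline{Y}^{X_{QG}}$.

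The containment $\mathcal{A}\subseteq C_{QG}(Y)$ will follow by checking that $f\mapsto f|_Y$ preserves boundedness, continuity, and slow oscillation (since $Y$ carries the induced metric and tending to infinity in $Y$ implies tending to infinity in $X$). For geometric slow oscillation the point is that any integral $(k,k)$-quasi-geodesic ray in $Y$ is also one in $X$, and the $m$-thread computed inside $Y$ is contained in the $m$-thread computed inside $X$, so the fan-diameter bound for $f \in C_{QG}(X)$ restricts to the required bound for $f|_Y$.

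The substantive part is the reverse inclusion, which I would establish by constructing, for each $f\in C_{QG}(Y)$, an extension $\tilde{f}\in C_{QG}(X)$ with $\tilde{f}|_Y$ uniformly close to $f$. Using paracompactness and the hypothesis $X=B(Y,r)$, fix a locally finite cover $\{U_\alpha\}$ of $X$ with $U_\alpha\subseteq B(y_\alpha,r)$ for some $y_\alpha\in Y$, a continuous partition of unity $\{\phi_\alpha\}$ subordinate to it, and set
\[
\tilde{f}(x) := \sum_\alpha \phi_\alpha(x)\,f(y_\alpha).
\]
Boundedness by $\|f\|_\infty$ and continuity are immediate. Slow oscillation of $\tilde f$ holds because only indices with $d(y_\alpha,x)<r$ contribute at $x$, so whenever $d(x,z)<M$ the relevant $y_\alpha$'s lie within $M+2r$ of each other, and they tend to infinity in $Y$ whenever $x\to\infty$. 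For geometric slow oscillation at a $(k,k)$-quasi-geodesic ray $g$ in $X$ from $x_0$, choose $g'(i)\in Y$ with $d(g(i),g'(i))<r$; the triangle inequality shows $g'$ is an integral $(k+2r,k+2r)$-quasi-geodesic ray in $Y$. For any $h$ in the fan of $g$, defining $h'(i)\in Y$ within $r$ of $h(i)$ and equal to $g'(i)$ for $i\le m$ places $h'$ in the $m$-thread of $g'$ in $Y$. Combining the fan condition for $f\in C_{QG}(Y)$ applied to $g'$ with the estimate $|\tilde f(h(i))-f(h'(i))|<\epsilon$ for large $i$ (itself a consequence of slow oscillation of $f$ over $2r$-balls at infinity) yields the desired diameter bound for $\tilde f$ on $C_X(x_0,k,k,g,m)$. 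To upgrade $\tilde f|_Y \approx f$ to uniform approximation, one corrects $\tilde f$ by a Tietze extension of the error $f-\tilde f|_Y$, which lies in $C_0(Y)$ by the oscillation estimate, cut off by a function supported in $X=B(Y,r)$ so that the correction lies in $C_0(X)\subseteq C_{QG}(X)$.

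The second assertion, $\partial_{QG}X \subseteq \overline{Y}^{X_{QG}}$, reduces to coarse density: given $p\in\partial_{QG}X$, accessible via a quasi-geodesic ray by definition, pick a sequence $x_n\in X$ with $x_n\to p$ and $y_n\in Y$ with $d(x_n,y_n)<r$; since $x_n\to\infty$ in $X$, slow oscillation of every $f\in C_{QG}(X)$ forces $f(x_n)-f(y_n)\to 0$, so $f(y_n)\to f(p)$ and hence $y_n\to p$ in $X_{QG}$ as well. The main technical obstacle is the geometric-slow-oscillation check for $\tilde f$: one has to control the $2r$-scale smearing introduced by the partition-of-unity formula via the fan condition of $f$ on a projected quasi-geodesic in $Y$, while carefully tracking how quasi-geodesic constants degrade under the projection $g\mapsto g'$.
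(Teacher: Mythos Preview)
Your approach matches the paper's: both identify $\overline{Y}^{X_{QG}}$ with $Y_{QG}$ by showing that every $f\in C_{QG}(Y)$ extends to $C_{QG}(X)$, and both verify geometric slow oscillation of the extension by projecting integral quasi-geodesics from $X$ to $Y$ via a retraction $p:X\to Y$ with $d(x,p(x))<r$ (the paper calls this the ``shift operator'' $S_k$) and then invoking the cone condition for $f$ along the projected ray; the argument for $\partial_{QG}X\subseteq\overline{Y}$ is likewise identical. The one genuine difference is how the slowly-oscillating extension is produced: the paper cites the Dydak--Weighill extension theorem to get an \emph{exact} Higson extension $g|_Y=f$, so no correction is needed, whereas you build $\tilde f$ explicitly by a partition of unity and must then repair the $C_0(Y)$ discrepancy. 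Your description of that repair is garbled---``cut off by a function supported in $X=B(Y,r)$'' is vacuous since $B(Y,r)=X$---but the step is routine: for example, Tietze-extend $e:=f-\tilde f|_Y$ to $e_1\in C_b(X)$ and truncate pointwise by a decreasing continuous envelope $\psi$ with $|e(y)|\le\psi(\|y\|)\to 0$; alternatively note that \emph{any} slowly-oscillating extension of a $C_0(Y)$ function automatically lies in $C_0(X)$, since for $x$ far out one picks $y\in Y$ with $d(x,y)<r$ and bounds $|\hat e(x)|\le|\hat e(x)-\hat e(y)|+|e(y)|$. With that fix your argument is complete and has the advantage of being self-contained.
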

\begin{proof}
Given a geometrically slowly-oscillating function $f:Y\to [0,1]$, we can extend it to a Higson function $g:X\to [0,1]$ by \cite{DW}.
That means $g$ is slowly-oscillating and continuous.
We need to show $g$ is geometrically slowly-oscillating. 

Given $k\ge 1$ let us define a shift operator $S_k$ sending all integral $(k,k)$-quasi-geodesics $a$ in $X$ based at $x_0\in Y$
to integral $(k+2r,k+2r)$-quasi-geodesics $S_k(a)$ in $Y$ based at $x_0$.  First, we choose a retraction $p:X\to Y$ (i.e. a function fixing points of $Y$) such that $d(x,p(x)) < r$ for all $x\in X$.
Then, given $a$, 
for each $i\ge 1$ define
$S_k(a)(i)$ to be equal to $p(a(i))$.

Now,
suppose $\epsilon > 0$ and $a$ is an integral $(k,k)$-quasi-geodesic ray in $X$ originating at $x_0\in Y$. Since $g$ is slowly-oscillating,
there is $t > 0$ such that $|g(x)-g(y)| < \epsilon/8$ if
$x,y\in X\setminus B(x_0,t)$ and $d(x,y) < r$. 
There is $s > 0$ such that the $m$-cone $C_X(x_0,k+2r,k+2r,S_k(a),m)$ is contained in $X\setminus B(x_0,t+r)$
for all $m\ge s$. Pick $m > s$ with the property that the diameter of 
$f(C_Y(x_0,k+2r,k+2r,S_k(a),m))$ is smaller than $\epsilon/8$. 
Given $x\in C_X(x_0,k,k,a,m)$ there is an integral $(k,k)$-quasi-geodesic $b$ agreeing with $a$ on indices up to $m$ such that
$x=b(j)$ for some $j > m$. Put $y:=S_k(b)(j)$ and $z:=S_k(b)(m)$.
Now, $|g(y)-g(z)| < \epsilon/8$ as both $y$ and $z$ belong to
$C_Y(x_0,k+2r,k+2r,S_k(a),m)$ and $g(y)=f(y)$, $g(z)=f(z)$. Also, $d(x,y) < r$, hence $|g(x)-g(y)| < \epsilon/8$.
Finally, $|g(x)-g(z)| <\epsilon/2$,
so the diameter of 
$g(C_X(x_0,k+2r,k+2r,a,m))$ is smaller than $\epsilon$. 

If the closure of $Y$ in $X_{QG}$ misses a point $w$ in $\partial_{QG}X$, then there is a continuous function
$c:X_{QG}\to [0,1]$ sending $cl(Y)$ to $0$ and sending $w$ to $1$. Hence, there is a sequence $x_n$ of points in $X$ diverging to infinity so that $c(x_n) > .5$ for each $n\ge 1$.
As $d(p(x_n),x_n) < r$ and $c|X$ is slowly-oscillating
$c(x_n)=|c(x_n)-c(p(x_n))|\to 0$, a contradiction. 
\end{proof}

\begin{Corollary}
Suppose $k\ge 1$ and $X$ is a metric space such that for some $M > 0$ every bounded subset of $X$ can be covered by finitely many sets of diameter less than $M$.
If $X$ is integrally $(k,k)$-quasi-geodesic, then for every $x_0$
and every integral quasi-geodesic ray $g$ there is an integral
$(k+2M,k+2M)$-quasi-geodesic ray $h$ based at $x_0$ that converges to the same point on the quasi-geodesic boundary of $X$ as $g$ does.
\end{Corollary}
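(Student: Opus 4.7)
The plan is to reduce everything to Lemma \ref{SequenceVsRayLemma}. Since $X$ is integrally $(k,k)$-quasi-geodesic, for each $n\ge 1$ I would pick an integral $(k,k)$-quasi-geodesic $g_n$ from $x_0$ to $g(n)$. Set $x_n:=y_n:=g(n)$, so trivially $d(x_n,y_n)=0<M$, and $d(x_0,x_n)\to\infty$ because $g$ is a quasi-geodesic ray (this uses only $d(g(1),g(n))\to\infty$ and the triangle inequality). Feeding this data into Lemma \ref{SequenceVsRayLemma} yields an integral $(k,k+2M)$-quasi-geodesic ray $h$ based at $x_0$ (automatically an integral $(k+2M,k+2M)$-quasi-geodesic ray, since enlarging parameters only weakens the defining inequality), together with a subsequence $k(n)$ and integral $(k,k+2M)$-quasi-geodesics $h_n$ such that the terminal value of $h_n$ is $y_{k(n)}=g(k(n))$, and each thread $T(x_0,k,k+2M,h,m)$ contains all but finitely many $h_n$'s.

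To finish, I would argue $h$ and $g$ represent the same character on $C_{QG}(X)/C_0(X)$, that is, the same point of $\partial_{QG}X$. Fix $f\in C_{QG}(X)$. Eventual containment of $h_n$ in the $m$-thread of $h$ means $h_n(i)=h(i)$ for all $i\le m$ once $n$ is large, so $h_n$ boundedly approaches $h$ in the sense of Definition \ref{BoundedlyApproachesDef} (with $C$ arbitrary). Applying Proposition \ref{BoundedlyApproachesProp} (with parameter $k+2M$ in place of $k$) gives $f(h_n)\to f(h)$. By the convention at the end of that proposition, $f(h_n)$ equals $f$ evaluated at the endpoint of $h_n$, namely $f(g(k(n)))$. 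But $f\circ g$ is a Cauchy sequence converging to $f(g)$ by the Corollary following Proposition \ref{UniformLimitOfGeoSloOsc} applied to the quasi-geodesic ray $g$, so $f(g(k(n)))\to f(g)$. Hence $f(h)=f(g)$, and since this holds for every $f\in C_{QG}(X)$, $h$ and $g$ converge to the same point of $\partial_{QG}X$.

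The main thing to be careful about is the parameter bookkeeping: Lemma \ref{SequenceVsRayLemma} outputs $(k,k+2M)$-objects, whereas Proposition \ref{BoundedlyApproachesProp} is stated for $(k,k)$-quasi-geodesics, so one must reinterpret everything as $(k+2M,k+2M)$-quasi-geodesics before quoting the proposition; the same reinterpretation gives the parameter in the conclusion. The remainder is routine once one identifies $y_{k(n)}$ in the lemma with $g(k(n))$ in the present setup.
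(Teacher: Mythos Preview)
Your proof is correct and follows exactly the route the paper intends: feed the sequence $x_n=y_n=g(n)$ together with $(k,k)$-quasi-geodesics $g_n$ from $x_0$ to $g(n)$ into Lemma \ref{SequenceVsRayLemma}, and then use Proposition \ref{BoundedlyApproachesProp} on the resulting $h_n$'s to conclude $f(h)=f(g)$ for every $f\in C_{QG}(X)$. The paper's one-line proof ``Use \ref{SequenceVsRayLemma}'' simply leaves this second step, and the parameter bookkeeping you carefully note, to the reader. One small correction of no mathematical consequence: the unnamed Corollary asserting that $f\circ g$ is Cauchy appears \emph{before} Definition \ref{BoundedlyApproachesDef} and Proposition \ref{BoundedlyApproachesProp}, not after Proposition \ref{UniformLimitOfGeoSloOsc}.
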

\begin{proof}
Use \ref{SequenceVsRayLemma}.
\end{proof}

\begin{Proposition}\label{BdIsMetrizable}
Suppose $k\ge 1$ and $X$ is a discrete and proper metric space.
If $X$ is integrally $(k,k)$-quasi-geodesic, then its quasi-geodesic boundary is compact metrizable and every point of it is the limit of some
integral $(k,k)$-quasi-geodesic ray starting at a base point.
\end{Proposition}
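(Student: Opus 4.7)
The strategy is to exhibit $\partial_{QG}X$ as the continuous Hausdorff image of a compact metric parameter space of rays, and then read off metrizability from that. Since $X$ is discrete and proper, every closed ball in $X$ is finite; in particular for each $i\ge 1$ the ball $B(x_0,ki+k)$ is finite. Let $R$ denote the family of all integral $(k,k)$-quasi-geodesic rays $g$ with $g(1)=x_0$. Each such $g$ satisfies $g(i)\in B(x_0,ki+k)$, so $R$ sits inside the compact metrizable product $\prod_{i\ge 1}B(x_0,ki+k)$; since the quasi-geodesic inequalities are closed conditions in the product topology, $R$ is itself compact and metrizable.

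For each $g\in R$, the lower bound in the quasi-geodesic definition forces $d(x_0,g(i))\to\infty$, and the Corollary following Example \ref{CoarselyClopenGeoSlo} makes $f\circ g$ a Cauchy sequence for every $f\in C_{QG}(X)$. The resulting character on $C_{QG}(X)$ vanishes on $C_0(X)$ and therefore determines a point $\pi(g)\in\partial_{QG}X$. To verify that $\pi:R\to\partial_{QG}X$ is continuous, observe that pointwise convergence $g_n\to g$ in $R$ means for every $m$ we have $g_n(i)=g(i)$ for all $i\le m$ and all sufficiently large $n$; hence $g_n$ boundedly approaches $g$ and Proposition \ref{BoundedlyApproachesProp} gives $f(g_n)\to f(g)$ for every $f\in C_{QG}(X)$, i.e.\ $\pi(g_n)\to\pi(g)$.

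The main step will be surjectivity of $\pi$. Assume for contradiction that there is $q\in\partial_{QG}X\setminus\pi(R)$. Since $R$ is compact, $\pi(R)$ is closed in the compact Hausdorff space $X_{QG}$, so Urysohn produces $\phi:X_{QG}\to[0,1]$ with $\phi(q)=1$ and $\phi|_{\pi(R)}=0$; write $f:=\phi|_X\in C_{QG}(X)$. The plan is to show $f\in C_0(X)$, which will contradict $\phi(q)=1$. Given any sequence $y_n\in X$ with $d(x_0,y_n)\to\infty$, choose $(k,k)$-quasi-geodesics $\gamma_n$ from $x_0$ to $y_n$ and apply Lemma \ref{SequenceVsRayLemma}, selecting the sets $B_n$ in its proof to be singletons (permissible because bounded sets in $X$ are finite); this yields a $(k,k)$-quasi-geodesic ray $g\in R$ together with finite quasi-geodesics $h_n$ boundedly approaching $g$ whose endpoints are $y_{k(n)}$ along a subsequence. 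Proposition \ref{BoundedlyApproachesProp} then gives $f(y_{k(n)})=f(h_n)\to f(g)=\phi(\pi(g))=0$. Running this extraction along any subsequence on which $|f|$ stays bounded away from zero would produce a contradiction, so $f(y_n)\to 0$, confirming $f\in C_0(X)$ and contradicting $\phi(q)=1$.

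Once $\pi$ is known to be a continuous surjection of a compact metric space onto the compact Hausdorff space $\partial_{QG}X$, pulling back embeds $C(\partial_{QG}X)$ isometrically into the separable $C^\ast$-algebra $C(R)$, so $C(\partial_{QG}X)$ is separable and $\partial_{QG}X$ is compact metrizable. Since every boundary point lies in $\pi(R)$, it is by construction the limit of some $(k,k)$-quasi-geodesic ray from $x_0$, completing the proof. I expect the main obstacle to be the surjectivity step, where the discreteness-enhanced form of Lemma \ref{SequenceVsRayLemma} is what produces an actual $(k,k)$-ray (rather than merely a $(k,k+2M)$-ray) with the correct endpoint behaviour, and where the Urysohn/Tietze device is needed to convert the failure of surjectivity into a concrete function $f\in C_{QG}(X)$ that one can then prove belongs to $C_0(X)$.
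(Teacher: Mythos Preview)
Your proof is correct and follows essentially the same route as the paper's: exhibit the space of integral $(k,k)$-quasi-geodesic rays based at $x_0$ as a compact metrizable space (you use a closed subset of a product of finite balls, the paper uses the inverse limit of the finite sets $S_n$ of length-$n$ quasi-geodesics---these are the same object), map it onto $\partial_{QG}X$, and prove surjectivity by Urysohn plus the discreteness-enhanced form of Lemma~\ref{SequenceVsRayLemma}. The only notable cosmetic difference is that for continuity you invoke Proposition~\ref{BoundedlyApproachesProp}, whereas the paper argues directly with threads; and you spell out the $C^\ast$-algebra separability step for metrizability, which the paper leaves implicit.
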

\begin{proof}
The quasi-geodesic compactification of $X$ was defined in a contravariant way, i.e. by mapping $X$ to $\mathbb{C}$.
The properties of the quasi-geodesic boundary of $X$ will be detected in a covariant way, i.e. by mapping a compact metric space
onto it.

Fix $x_0\in X$ and consider $B_n$, the subset of all points in $X$ reachable from $x_0$ by an integral $(k,k)$-quasi-geodesic defined on
$[1,n]\cap Z_+$. Consider the set $S_n$ of all integral $(k,k)$-quasi-geodesic defined on
$[1,n]\cap Z_+$ and starting at $x_0$. That set is finite, so we put the discrete topology on it.

There are maps $r^{n+1}_n:S_{n+1}\to S_n$ defined
by $r^{n+1}_n(g):=g|[1,n]\cap Z_+$. Its inverse limit
maps surjectively onto all integral $(k,k)$-quasi-geodesic rays in $X$ starting at $x_0$, hence it
maps continuously onto the quasi-geodesic boundary of $X$.
Indeed, suppose $p\in U$, $p\in\partial_{QG} X$ and $U$ is open in
$X_{QG}$. There is a continuous function
$f:X_{QG}\to [0,1]$ such that $f(p)=1$ and $f(X_{QG}\setminus U)\subset {0}$. Suppose $g$ is an integral $(k,k)$-quasi-geodesic
converging to $p$. Since $f|X$ is geometrically slowly-oscillating,
there is $m\ge 1$ such that such that the diameter of 
$\{f(h(i))\mid i \ge m, h\in T(x_0,k,k,g,m)\}$ is smaller than $1/2$. 
Notice $T(x_0,k,k,g,m)$ represents a neighborhood $V$ of $g$ in the inverse limit $M$
and its image in $\partial_{QG} X$ must be contained in $U$ as $f(h)\ge 1/2$
for any $h\in V$.

Suppose that image is not all of the geometric quasi-geodesic boundary of $X$, so there is a non-zero (on $\partial_{QG} X$) geometrically slowly-oscillating
$f:X\to [0,1]$ such that $f(g)=0$ for each integral $(k,k)$-quasi-geodesic ray $g$ in $X$ starting at $x_0$ and $f(p)=1$ for some $p\in\partial_{QG}X$.

Choose $x_n\in X$ diverging to infinity such that $f(x_n) > 1/2$ for each $n\ge 1$.
Using \ref{SequenceVsRayLemma} detect a subsequence of
$\{x_n\}$ converging to the same point as an integral $(k,k)$-quasi-geodesic ray, a contradiction.
\end{proof}

\begin{Observation}
If one changes the definition of the set $S_n$ above as that of all integral $(k,k)$-quasi-geodesic defined on
$[1,m]\cap Z_+$, $m\leq n$, and starting at $x_0$,
then there are retractions $r^{n+1}_n:S_{n+1}\to S_n$ defined
by $r^{n+1}_n(g):=g$ if the domain of $g$ is $[1,m]\cap Z_+$, $m\leq n$, and by
$r^{n+1}_n(g):=g|[1,n]\cap Z_+$ if the domain of $g$ is $[1,n+1]\cap Z_+$, $m\leq n$. Its inverse limit
maps continuously onto $X_{QG}$.
\end{Observation}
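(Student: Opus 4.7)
My plan is to construct a continuous surjection $\pi:M\to X_{QG}$, where $M$ denotes the inverse limit of the modified system $(S_n,r^{n+1}_n)$, extending the approach of Proposition~\ref{BdIsMetrizable}.

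First I would classify the coherent sequences $(g_n)\in M$. Writing $m_n$ for the length of $g_n\in S_n$, the formulas for $r^{n+1}_n$ force $m_{n+1}\in\{m_n,n+1\}$, with $m_{n+1}=n+1$ possible only when $m_n=n$. Hence every element of $M$ is of one of two types: (i) \emph{terminating}, i.e.\ there is a finite integral $(k,k)$-quasi-geodesic $g$ of some length $m$ starting at $x_0$ with $g_n=g$ for all $n\ge m$, or (ii) \emph{growing}, with $m_n=n$ for every $n$, encoding an infinite integral $(k,k)$-quasi-geodesic ray from $x_0$. Define $\pi:M\to X_{QG}$ by sending a type~(i) element to the endpoint $g(m)\in X$ and a type~(ii) element to the point of $\partial_{QG}X$ to which its ray converges. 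Surjectivity is immediate: every $x\in X$ is the endpoint of a finite integral $(k,k)$-quasi-geodesic from $x_0$ since $X$ is integrally $(k,k)$-quasi-geodesic, while every boundary point is the limit of such a ray by Proposition~\ref{BdIsMetrizable}.

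For continuity I would treat the two types separately. At a type~(i) element $\alpha$ stabilizing at $g$ of length $m$, the basic neighborhood $V_{m+1}:=\{(h_n)\in M:h_i=g_i\text{ for }i\le m+1\}$ forces $h_{m+1}=g$; then the condition $r^{n+1}_n(h_{n+1})=g$ propagates inductively, because any $h_{n+1}$ of length $n+1>m$ would restrict to a quasi-geodesic of length $n\ne m$. Thus $V_{m+1}=\{\alpha\}$, so $\alpha$ is isolated and continuity at $\alpha$ is automatic. At a type~(ii) element $\alpha$ with ray $g$, and for an open $U\ni\pi(\alpha)$ in $X_{QG}$, pick a continuous $f:X_{QG}\to[0,1]$ with $f(\pi(\alpha))=1$ that vanishes on $X_{QG}\setminus U$. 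Since $f|X\in C_{QG}(X)$, Definition~\ref{GeoSlowOscDef} produces $m\ge 1$ such that $f$ has diameter less than $1/2$ on $\{h(i):i\ge m,\ h\in T(x_0,k,k,g,m)\}$; as this set contains the sequence $f(g(j))\to 1$, we obtain $f(h(i))\ge 1/2>0$ for every $h\in T(x_0,k,k,g,m)$ and every $i\ge m$ in the domain of $h$. Any $\beta\in V_m$ corresponds to some $h\in T(x_0,k,k,g,m)$, finite or infinite, and $\pi(\beta)$ is either $h(i)$ for some $i\ge m$ or a limit of such values, so $f(\pi(\beta))\ge 1/2>0$ and $\pi(\beta)\in U$.

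The main technical point is that a single $f$-estimate must simultaneously control finite terminating extensions of $g|[1,m]$ and genuinely infinite rays extending it; this is precisely what the $m$-cone formulation of Definition~\ref{GeoSlowOscDef} delivers, so both cases of $\beta\in V_m$ are handled by the same bound. Since $M$ is an inverse limit of finite discrete sets, it is compact and metrizable, and $\pi$ is the desired continuous surjection onto the Hausdorff space $X_{QG}$.
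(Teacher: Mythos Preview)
Your proof is correct and follows the natural extension of the argument in Proposition~\ref{BdIsMetrizable}; the paper itself states this Observation without proof, so there is nothing to compare against beyond noting that your classification of coherent sequences into terminating and growing types, the isolation of terminating points, and the use of the $m$-cone estimate for the growing ones are exactly the details the paper leaves implicit.
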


\begin{Corollary}
Suppose $k\ge 1$ and $X$ is a metric space such that for some $M > 0$ every bounded subset of $X$ can be covered by finitely many sets of diameter less than $M$.
If $X$ is integrally $(k,k)$-quasi-geodesic, then the quasi-geodesic boundary of $X$ is compact metrizable and every  point on it is the limit of 
some  integral
$(k+2M,k+2M)$-quasi-geodesic ray $h$ based at $x_0$.
Moreover, if $X$ is proper, then the quasi-geodesic compactification of $X$ is compact metrizable.
\end{Corollary}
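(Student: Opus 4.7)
The plan is to reduce the statement to Proposition \ref{BdIsMetrizable} (the discrete case) by passing to a suitable $M$-separated net $Y \subseteq X$, and then to transfer the conclusions back to $X$ via Lemma \ref{DiscreteSubsetLemma}.

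Fix $x_0 \in X$ and, by Zorn's lemma, choose a maximal subset $Y$ of $X$ with $x_0 \in Y$ and $d(y_1, y_2) \geq M$ for all distinct $y_1, y_2 \in Y$. The finite-cover hypothesis forces every bounded ball of $X$ to meet $Y$ only finitely often, since any set of diameter less than $M$ contains at most one point of $Y$. Hence $Y$ is discrete and closed in $X$, and as a metric space it is proper. Maximality yields $X = B(Y, M)$, so one can pick a map $p : X \to Y$ fixing $Y$ with $d(x, p(x)) < M$ for every $x \in X$. Given two points of $Y$ joined by an integral $(k,k)$-quasi-geodesic $g$ in $X$, the estimate $|d(p(a),p(b)) - d(a,b)| \leq 2M$ shows that $p \circ g$ is an integral $(k+2M, k+2M)$-quasi-geodesic in $Y$; therefore $Y$ is integrally $(k+2M, k+2M)$-quasi-geodesic. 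Proposition \ref{BdIsMetrizable} then gives that $\partial_{QG} Y$ is compact metrizable and that every point of it is the limit of some integral $(k+2M, k+2M)$-quasi-geodesic ray in $Y$ based at $x_0$.

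Lemma \ref{DiscreteSubsetLemma} with $r = M$ identifies the closure of $Y$ in $X_{QG}$ with $Y_{QG}$ and shows that $\partial_{QG} X$ is contained in this closure. Since the topology on $X$ induced from $X_{QG}$ is the original metric topology and $Y$ is closed and discrete in $X$, no point of $X \setminus Y$ is a limit of $Y$ in $X_{QG}$. Consequently the closure of $Y$ in $X_{QG}$ meets $X$ in exactly $Y$, forcing $\partial_{QG} Y = \partial_{QG} X$ as subsets of $X_{QG}$. This yields compact metrizability of $\partial_{QG} X$ and the claimed rays, which are integral $(k+2M, k+2M)$-quasi-geodesic rays in $Y \subseteq X$ based at $x_0$. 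The main obstacle here is exactly this identification of boundaries, which hinges on $Y$ being closed and discrete so that taking closure in $X_{QG}$ adds no new points of $X$.

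For the moreover statement, assume $X$ is proper. Then $X$ is locally compact, $\sigma$-compact, and second countable, so $C_0(X)$ is separable. Compact metrizability of $\partial_{QG} X$ gives separability of $C(\partial_{QG} X)$. By Tietze extension, the restriction map $C(X_{QG}) \to C(\partial_{QG} X)$ is surjective with kernel $C_0(X)$, so $C_{QG}(X) \cong C(X_{QG})$ is separable as an extension of two separable $C^{*}$-algebras. Gelfand duality then gives that $X_{QG}$ is compact metrizable.
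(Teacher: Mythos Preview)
Your reduction to the discrete net $Y$ is exactly the paper's approach: pick a maximal $M$-separated subset containing $x_0$, push quasi-geodesics into $Y$ via a nearest-point retraction to see that $Y$ is integrally $(k+2M,k+2M)$-quasi-geodesic, invoke Proposition \ref{BdIsMetrizable} for $Y$, and use Lemma \ref{DiscreteSubsetLemma} to identify $\partial_{QG}Y$ with $\partial_{QG}X$. You are in fact more explicit than the paper about why $Y$ is proper and why the closure of $Y$ in $X_{QG}$ adds no new points of $X$.

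The genuine difference is in the ``moreover'' clause. The paper builds a countable basis for $X_{QG}$ by hand: it takes a countable basis $\{U_n\}$ of $\partial_{QG}X$, thickens each inclusion $cl(U_m)\subset U_n$ to an open $V_{m,n}\subset X_{QG}$ avoiding a large ball, and checks that these together with a countable basis of bounded open sets in $X$ form a basis of $X_{QG}$. Your route is functional-analytic: from properness of $X$ you get separability of $C_0(X)$, from metrizability of $\partial_{QG}X$ you get separability of $C(\partial_{QG}X)$, and the short exact sequence $0\to C_0(X)\to C(X_{QG})\to C(\partial_{QG}X)\to 0$ gives separability of $C(X_{QG})$, hence metrizability of $X_{QG}$ by Gelfand duality. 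Both arguments are correct; yours is cleaner and uses no ad hoc choices, while the paper's is self-contained and avoids importing $C^\ast$-algebra facts (separability of extensions, the metrizability criterion) that a reader of this paper might not have at hand.
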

\begin{proof}
Pick a maximal subset $Y$ of $X$ containing $x_0$ such that $d(x,y)\ge M$
if $x\ne y\in Y$. Every integral $(k,k)$-quasi-geodesic $g$ in $X$
is approximable by $h$ with values in $Y$ such that $d(h(i),g(i)) < M$ for all $i$
in the domain of $g$. Notice $h$ is an integral $(k+2M,k+2M)$-quasi-geodesic in $Y$. The closure of $Y$ in the geometric quasi-geodesic compactification of $X$ is equal to the quasi-geodesic compactification of $Y$, hence is compact metrizable
and $\partial_{QG}X=\partial_{QG}Y$.

To see that the quasi-geodesic compactification of $X$ is compact metrizable, we choose a countable basis $\{U_n\}_{n\ge 1}$ of open subsets of
$\partial_{QG}X$. If $cl(U_m)\subset int(U_n)=\emptyset$,
choose open $V_{m,n}$ 
extending $U_m$ such that $\partial_{QG}X\cap cl(V_{m,n})\subset
U_n$ and it misses $B(x_0,m+n)$.
Enlarge a countable basis of $X$ consisting of bounded subsets
by adding all finite intersections of sets of the form $V_{m,n}$.
That family is a countable basis of open sets for $X_{QG}$, hence
$X_{QG}$ is metrizable. Indeed, if $p\in \partial_{QG}X$ is contained in an open set $W$ of $X_{QG}$, then the intersection
of all $cl(V_{m,n})$, $p\in V_{m,n}$ is first contained in $\partial_{QG}X$ and then it is contained in the intersection of all
$U_n$ that contain $p$. Hence, that intersection equals $p$,
so a finite intersection must be a subset of $W$ due to compactness of $X_{QG}$.
\end{proof}

\begin{Corollary}\label{PointsViaGeodesicRays}
Given a proper geodesic space $X$ every geodesic ray $g$ in $X$ converges to a point of the boundary $\partial_{QG}X$. If $x_0\in X$ and  $p\in\partial_{QG}X$, then there is a geodesic ray in $X$ starting at $x_0$ and converging to $p$ in $X_{QG}$.
\end{Corollary}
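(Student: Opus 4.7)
The plan is to handle the two claims separately, in both cases restricting geodesic rays to $Z_+$ to turn them into integral $(1,1)$-quasi-geodesics and then invoking the convergence machinery already developed.

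For the first claim, a geodesic ray $g:[0,\infty)\to X$ restricts to an integral $(1,1)$-quasi-geodesic on $Z_+$. By the Corollary preceding Definition \ref{BoundedlyApproachesDef}, for every $f\in C_{QG}(X)$ the sequence $f(g(n))$ converges; slow oscillation of $f$ combined with $d(g(t),g(\lfloor t\rfloor))\le 1$ forces $f(g(t))$ to converge to the same limit, which I denote $f(g)$. The assignment $f\mapsto f(g)$ is a character on $C_{QG}(X)$ and so determines a point $p\in X_{QG}$ to which $g(t)$ converges. Since $d(x_0,g(t))\to\infty$, every function in $C_0(X)$ vanishes at $p$, so $p\notin X$ and hence $p\in\partial_{QG}X$.

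For the second claim, fix $x_0\in X$ and $p\in\partial_{QG}X$. Metrizability of $X_{QG}$ and density of $X$ in $X_{QG}$ provide $x_n\in X$ with $x_n\to p$; properness of $X$ combined with $p\notin X$ forces $d(x_0,x_n)\to\infty$, since otherwise a subsequence would have a limit inside $X$. Since $X$ is proper and geodesic, pick geodesics from $x_0$ to $x_n$, which restrict to integral $(1,1)$-quasi-geodesics on $Z_+$. Properness also supplies the finite-cover hypothesis of Lemma \ref{SequenceVsRayLemma}, so applying that Lemma with $y_n:=x_n$ and any $M>0$, and invoking Observation \ref{ExistenceOfGeodesicRays}, I obtain a geodesic ray $g$ based at $x_0$ together with quasi-geodesics $h_n$ with endpoints $x_{k(n)}$ that agree with $g$ on arbitrarily long initial segments. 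Hence $h_n$ boundedly approaches $g$ in the sense of Definition \ref{BoundedlyApproachesDef}, with $C$ taken arbitrarily small. Proposition \ref{BoundedlyApproachesProp} then yields $f(h_n)\to f(g)$ for every $f\in C_{QG}(X)$, and since $h_n$ is not a ray the convention $f(h_n)=f(x_{k(n)})$ applies; the continuity of the extension of $f$ to $X_{QG}$ together with $x_{k(n)}\to p$ gives $f(x_{k(n)})\to f(p)$. Therefore $f(g)=f(p)$ for every $f\in C_{QG}(X)$, which means $g$ converges to $p$ in $X_{QG}$.

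The main obstacle, a mild one, is to verify that the sequence $h_n$ produced by Lemma \ref{SequenceVsRayLemma} really does boundedly approach the geodesic ray $g$ in the precise sense of Definition \ref{BoundedlyApproachesDef}; this follows because membership of $h_n$ in the $m$-thread $T(x_0,1,1+2M,g,m)$ forces exact agreement $h_n(i)=g(i)$ for $i\le m$, so the bounded-approach condition holds with any positive $C$.
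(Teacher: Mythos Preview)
Your proof is correct and follows essentially the same approach as the paper's own proof, which simply says to choose $x_n\to p$ and invoke Observation \ref{ExistenceOfGeodesicRays}. You have filled in the details the paper leaves implicit: restricting geodesics to $Z_+$, verifying that the $h_n$ produced by Lemma \ref{SequenceVsRayLemma} boundedly approach $g$, and closing the argument with Proposition \ref{BoundedlyApproachesProp} to conclude $f(x_{k(n)})\to f(g)$ and hence $g\to p$.
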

\begin{proof}
If $p\in\partial_{QG}X$, choose a sequence $x_n$, $n\ge 1$, in $X$
converging to $p$ in $X_{QG}$. Use
\ref{ExistenceOfGeodesicRays}.
\end{proof}

\begin{Lemma}\label{SuffCondForBoundaryPoints}
Two integral quasi-geodesics $g$ and $h$ converge to the same point in the quasi-geometric boundary if there is $k$ such that both are $(k,k)$-quasi-geodesics and every cone
$C(x_0,k,k,g,m)$ contains infinitely many points of $h$.
\end{Lemma}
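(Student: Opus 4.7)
The plan is to pass to the Gelfand-dual picture: points of $X_{QG}$ correspond to characters of $C_{QG}(X)$, so $g$ and $h$ converge to the same boundary point exactly when $f(g)=f(h)$ for every geometrically slowly-oscillating $f\in C_{QG}(X)$, where $f(g)$ and $f(h)$ are the Cauchy limits supplied by the Corollary following \ref{CoarselyClopenGeoSlo}.

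First I would fix an arbitrary $f\in C_{QG}(X)$ and $\epsilon>0$, then invoke the defining property of geometric slow-oscillation along the quasi-geodesic $g$ to produce an index $m\ge 1$ with $\mathrm{diam}\,f(C_X(x_0,k,k,g,m))<\epsilon$. The key observation at this stage is that $g$ itself witnesses its own tail: for each $j\ge m$ the point $g(j)$ lies in $C_X(x_0,k,k,g,m)$ (take $h:=g$ in the definition of the cone), so $f(g)=\lim_j f(g(j))$ sits in the closure of a set of $f$-values of diameter at most $\epsilon$.

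Next I would use the hypothesis that $h$ visits $C_X(x_0,k,k,g,m)$ at an unbounded sequence of indices $i_n$. For those indices, $f(h(i_n))$ is confined to the same $\epsilon$-diameter image of the cone. Since $f\circ h$ is Cauchy with limit $f(h)$, the subsequence $f(h(i_n))$ also tends to $f(h)$; passing to the limit yields $|f(g)-f(h)|\le \epsilon$. Letting $\epsilon\to 0$ gives $f(g)=f(h)$, and Gelfand duality then closes the argument.

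I do not foresee a genuine obstacle. The two conceptually distinct inputs, namely the $m$ produced by the slow-oscillation of $f$ applied to $g$ and the infinite trace of $h$ in each cone, are matched by working inside the common cone $C_X(x_0,k,k,g,m)$; the whole argument is a bookkeeping of that match. The one point worth stating carefully is that $C_X(x_0,k,k,g,m)$ is defined via $g$-compatible quasi-geodesics, so the inclusion $g(j)\in C_X(x_0,k,k,g,m)$ for $j\ge m$ is witnessed by $g$ itself, while the hypothesis directly supplies the analogous witnesses for the points $h(i_n)$.
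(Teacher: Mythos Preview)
Your proposal is correct and follows essentially the same route as the paper: both arguments observe that for any geometrically slowly-oscillating $f$ and any $\epsilon>0$, the tails of $g$ and (by hypothesis) a subsequence of $h$ lie in a cone $C_X(x_0,k,k,g,m)$ whose $f$-image has diameter below $\epsilon$, hence $f(g)$ and $f(h)$ lie in the closure of that image and must coincide. The paper phrases this as a contradiction (assume $f(g)\ne f(h)$ and take $\epsilon$ small), while you run the identical estimate directly; your write-up is in fact more explicit about why $f(g)$ and $f(h)$ sit in that closure.
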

\begin{proof}
Suppose $f:X\to R$ is a geometrically slowly-oscillating function
such that $f(g)\ne f(h)$. Choose $\epsilon > 2\cdot |f(g)-f(h)$.
Choose $m\ge 1$ such that the diameter of $f(C_X(x_0,k,k,g,m))$
is smaller than $\epsilon$ the closure of that set contains both $f(g)$
and $f(h)$, a contradiction.
\end{proof}

\begin{Corollary}
Two integral quasi-geodesics $g$ and $h$ to converge to the same point in the boundary if their Hausdorff distance is finite.
\end{Corollary}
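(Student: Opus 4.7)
The plan is to reduce to Lemma \ref{SuffCondForBoundaryPoints} after allowing a larger quasi-isometry constant. Let $C$ bound the Hausdorff distance between the images of $g$ and $h$, put $x_0:=g(1)$, and fix $k$ for which both $g$ and $h$ are integral $(k,k)$-quasi-geodesics. For each index $i$ in the domain of $h$, pick $\sigma(i)$ in the domain of $g$ with $d(h(i),g(\sigma(i)))\le C$. Because $h$ is a ray, $d(h(i),x_0)\to\infty$, and then the lower $(k,k)$-bound on $g$ forces $\sigma(i)\to\infty$.

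Next I would splice $g$ and $h$ together: for each sufficiently large $i$, set $\tilde h_i(s) := g(s)$ for $1\le s\le \sigma(i)$ and $\tilde h_i(\sigma(i)+1) := h(i)$. The one substantive step is to show that a single constant $k'$, depending only on $k$ and $C$, makes every $\tilde h_i$ an integral $(k',k')$-quasi-geodesic. For $s\le s'\le \sigma(i)$ the estimate is inherited from $g$. For the final segment, the triangle inequality gives $|d(g(s),h(i))-d(g(s),g(\sigma(i)))|\le C$, which combined with the $(k,k)$-bounds on $g$ yields
$$\tfrac{s'-s}{k} - (2k+C) \;\le\; d(\tilde h_i(s),\tilde h_i(s')) \;\le\; k(s'-s)+(k+C).$$
Taking $k':=2k+C+1$ handles $\tilde h_i$ uniformly in $i$, and clearly $g$ and $h$ remain $(k',k')$-quasi-geodesics.

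Given any $m\ge 1$, for $i$ large enough that $\sigma(i)\ge m$, the quasi-geodesic $\tilde h_i$ agrees with $g$ on $[1,m]$ and realizes $h(i)=\tilde h_i(\sigma(i)+1)$ with $\sigma(i)+1\ge m$, so $h(i)\in C_X(x_0,k',k',g,m)$. Because $h$ is a $(k,k)$-quasi-geodesic, its values are pairwise distinct once indices differ by more than $k^2+k$, so the cone contains infinitely many points of $h$. Lemma \ref{SuffCondForBoundaryPoints} applied with $k'$ in place of $k$ then finishes the argument. The only real obstacle is the uniform quasi-geodesic check for $\tilde h_i$: the splicing point $\sigma(i)$ varies with $i$, so one must verify that the resulting constants depend only on $k$ and $C$, which the estimate above makes transparent.
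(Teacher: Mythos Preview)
Your argument is correct and follows the same route as the paper: enlarge the quasi-geodesic constant (you take $k'=2k+C+1$, the paper takes $k+2M$), splice each $h(i)$ onto an initial segment of $g$, and apply Lemma~\ref{SuffCondForBoundaryPoints}. One minor slip: it is the \emph{upper} $(k,k)$-bound $d(g(\sigma(i)),g(1))\le k(\sigma(i)-1)+k$, not the lower one, that forces $\sigma(i)\to\infty$.
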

\begin{proof}
Choose $k\ge 1$ such that both $g$ and $h$ are integral $(k,k)$-quasi-geodesics. Suppose the Hausdorff distance from $g$ to $h$ is less than $M$. Notice that $C_X(x_0,k+2M,k+2M,g,m)$ contains infinitely many points of $h$ and apply \ref{SuffCondForBoundaryPoints}.
\end{proof}

\begin{Corollary}\label{QGBdOfPlane}
The quasi-geodesic boundary of the plane is just one point.
The same is true for $\mathbb{Z}_+\times \mathbb{Z}_+$.
\end{Corollary}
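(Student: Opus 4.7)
The plan is to invoke Lemma~\ref{SuffCondForBoundaryPoints}. Given two integral quasi-geodesic rays $g,h$ from the origin in $X\in\{\mathbb{R}^2,\mathbb{Z}_+\times\mathbb{Z}_+\}$, pick $k_0$ so that both are integral $(k_0,k_0)$-quasi-geodesics. I will exhibit a single $k$ depending only on $k_0$ with the property that for every $m\ge 1$, every $y\in X$ with $|y|$ above some threshold $R(m,k_0)$ lies in the cone $C(0,k,k,g,m)$. Since $|h(j)|\to\infty$, this places all but finitely many $h(j)$ in each cone and the lemma concludes $g$ and $h$ converge to the same boundary point; combined with Corollary~\ref{PointsViaGeodesicRays} (every boundary point is the limit of a geodesic ray from the origin), the boundary collapses to one point.

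To place such a $y$ in the cone I construct a three-segment extension $\tilde h$ of $g|_{[1,m]}$ reaching $y$. Writing $p=g(m)$, one has $g([1,m])\subset B(0,k_0 m)$. The three segments, parametrized by unit integer steps and concatenated after $g|_{[1,m]}$, are: (I) from $p$ out to a point $p'$ with $|p'|=2k_0 m$, chosen in a direction that keeps the segment at distance $\gtrsim k_0 m$ from $g([1,m])$ outside a bounded neighborhood of $p$; (II) an arc of $\partial B(0,2k_0 m)$ from $p'$ to $y'=2k_0 m\cdot y/|y|$, replaced by a staircase in $X\cap\partial B(0,2k_0 m)$ of comparable length when $X=\mathbb{Z}_+\times\mathbb{Z}_+$; (III) the radial segment from $y'$ to $y$.

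The upper bound in the $(k,k)$-quasi-geodesic condition follows from the triangle inequality and the unit-speed parametrizations. The lower bound is automatic for pairs of indices lying together in $[1,m]$ or in a single segment. The mixed case is the content: for $i'$ on segment~II or~III the extension lies outside $B(0,2k_0 m)$, giving $d(g(i),\tilde h(i'))\ge k_0 m$, which dominates $(i'-i)/k-k$ once $k$ is a modest multiple of $k_0$ (the total length of $\tilde h$ is $|y|+O(k_0 m)$, so the endpoint inequality is also satisfied); for $i'$ on segment~I the chosen direction keeps $\tilde h(i')$ at distance $\gtrsim k_0 m$ from $g([1,m])$ outside a bounded neighborhood of $p$, and the bounded segment-I index range $i'-m\le 2k_0 m$, independent of $|y|$, closes the inequality.

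The main obstacle is the choice of segment-I direction: a naive radial escape from $p$ can pass arbitrarily close to some $g(i)$ if $g$ overshoots $p$ and later revisits the outgoing ray (which is possible even for $(k_0,k_0)$-quasi-geodesics when $k_0\ge 2$). The resolution is a planar dimension count: the set of ``bad'' directions from $p$---those whose outgoing ray meets the $k_0 m$-neighborhood of some $g(i)$ outside a bounded neighborhood of $p$---forms a finite union of angular sectors that a suitable choice of $u$ avoids, and the same argument works for the quadrant case after restricting to directions into $\mathbb{Z}_+\times\mathbb{Z}_+$. The threshold $R(m,k_0)$ is then determined by the endpoint inequality $|y|\ge(M-1)/k-k$ for the total length $M$ of $\tilde h$, which is straightforward once $|y|$ is sufficiently large relative to $m$ and $k_0$.
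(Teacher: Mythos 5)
Your reduction to Lemma~\ref{SuffCondForBoundaryPoints} is the right opening move, but the construction of $\tilde h$ has a genuine gap at segment~I, and the proposed ``planar dimension count'' does not close it. You need $d(g(i),\tilde h(i'))\ge (i'-i)/k-k$ for all $i\le m$ and all $i'$ on segment~I with $k$ depending only on $k_0$. The two triangle-inequality bounds $d\ge d(g(i),p)-t$ and $d\ge t-d(g(i),p)$ handle the regimes $t\lesssim (m-i)/k_0$ and $t\gtrsim k_0(m-i)$, but in the intermediate regime $t\asymp m-i$ the segment must genuinely stay at distance $\gtrsim (m-i)/k$ from $g(i)$; since $d(p,g(i))\asymp m-i$ as well, the sector of directions from $p$ violating this for a fixed $i$ has angular width bounded \emph{below} by a constant $c(k_0,k)>0$ independent of $i$ and $m$. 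There are $\asymp m$ such sectors and their centres can sweep the whole circle: take $g|_{[1,m]}$ so that the points $g(m-j)$ lie on the logarithmic spiral centred at $p=g(m)$ with polar equation $r=j/\sqrt2$, $\theta=\ln j$ (a unit-speed curve with $|j-j'|/\sqrt2\le d\le |j-j'|$, hence an integral $(2,2)$-quasi-geodesic, and it extends to a ray by spiralling back out along a half-turn-offset parallel spiral). This initial segment winds about $(\ln m)/2\pi$ times around $p$, so \emph{every} straight ray from $p$ crosses it at some $g(i)$ with $m-i\ge m/e^{2\pi}$ at parameter $t\approx(m-i)/\sqrt2$, where the required lower bound is $\gtrsim m/(320k)$ but the actual distance is $O(1)$. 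So for such $g$ no choice of direction $u$ works; the escape route must itself spiral through the corridors between the arms of $g$, and establishing that for arbitrary quasi-geodesic rays is a substantially harder planar separation argument than a count of avoidable sectors.

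The gap disappears, and you essentially recover the paper's argument, if you invoke Corollary~\ref{PointsViaGeodesicRays} at the \emph{start} rather than at the end: every boundary point of $\mathbb{R}^2$ is the limit of a straight geodesic ray from the origin, so it suffices to identify any two straight rays, and for straight $g$ the set $g([1,m])$ is a segment, your three-segment extension goes through with no direction-selection problem, and indeed Example~\ref{ConeExample} already records that such cones contain half-planes. (The paper's alternative route fixes one logarithmic-spiral quasi-geodesic $\sigma$ from the origin meeting every ray infinitely often; the cone of a straight ray trivially contains the crossing points, so \ref{SuffCondForBoundaryPoints} identifies every ray with $\sigma$.) For $\mathbb{Z}_+\times\mathbb{Z}_+$ note that \ref{PointsViaGeodesicRays} does not apply because the space is not geodesic; use Proposition~\ref{BdIsMetrizable} to reduce to $(k,k)$-quasi-geodesic rays and then, e.g., the two axial rays whose cones contain $\{y\ge m\}$ and $\{x\ge m\}$ respectively.
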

\begin{proof}
Either use \ref{ConeExample} or notice there is an integral quasi-geodesic on the plane intersecting all the rays emanating from the origin infinitely many times and apply \ref{SuffCondForBoundaryPoints}.
\end{proof}

\begin{Example}
The quasi-geometric boundary of the universal cover of $R^2\vee S^1$ is two sequences converging to two different points.
Indeed, that space is a sequence of flats intersecting reals at integers,
so every integral quasi-geodesic either settles in one of the flats or spends bounded time in each of them eventually either moving right or left on the real line.
\end{Example}

\begin{Proposition}\label{QGBdAreHomeo}
If $X$ and $Y$ are quasi-isometric, then their quasi-geodesic boundaries are homeomorphic.
\end{Proposition}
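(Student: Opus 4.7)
The plan is to exploit Gelfand duality: since $\partial_{QG}X$ is the maximal ideal space of $C_{QG}(X)/C_{0}(X)$, it suffices to produce a $C^\ast$-algebra isomorphism of these quotient algebras induced by a quasi-isometry $\phi: X\to Y$ with quasi-inverse $\psi: Y\to X$. The induced map will be the pullback $\phi^\ast f := f\circ \phi$.

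First I would record the elementary observation that if $\phi$ is a $(k,K)$-quasi-isometry and $g$ is an integral $(q,Q)$-quasi-geodesic on $X$, then $\phi\circ g$ is an integral $(kq,kQ+K)$-quasi-geodesic on $Y$. From this it follows that if $g$ is an integral quasi-geodesic ray based at $x_0$ and $h\in T(x_0,q,Q,g,m)$, then $\phi\circ h$ is an integral quasi-geodesic (with the larger constants) agreeing with $\phi\circ g$ on indices $\le m$. Consequently
\[
\phi\bigl(C_X(x_0,q,Q,g,m)\bigr) \subseteq C_Y\bigl(\phi(x_0),kq,kQ+K,\phi\circ g,m\bigr),
\]
so quasi-isometries send cones into cones with rescaled parameters, and the analogous inclusion holds for $\psi$.

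The cone inclusion immediately implies that the pullback of any geometrically slowly-oscillating function on $Y$ satisfies the geometric slow-oscillation condition on $X$: given $\epsilon>0$, choose $m$ so that $f(C_Y(\phi(x_0),kq,kQ+K,\phi\circ g,m))$ has diameter less than $\epsilon$, and the inclusion promotes this bound to $f\circ\phi$ on $C_X(x_0,q,Q,g,m)$. The one subtlety is continuity of $f\circ\phi$, since $\phi$ need not be continuous. I would sidestep this by applying Lemma \ref{DiscreteSubsetLemma} to maximal $1$-separated nets $X'\subset X$ and $Y'\subset Y$: the boundary of a net equals the boundary of the ambient space, and on discrete metric spaces continuity is automatic. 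After composing $\phi|_{X'}$ with a nearest-point projection $Y\to Y'$ one obtains an honest quasi-isometry $X'\to Y'$ at bounded distance from $\phi$, to which the cone argument applies cleanly.

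Finally, because $\psi\circ\phi$ is at bounded distance from $\mathrm{id}_X$ and every $f\in C_{QG}(X)$ is slowly-oscillating, the function $f-f\circ\psi\circ\phi$ vanishes at infinity, so $\psi^\ast\circ\phi^\ast$ is the identity modulo $C_0(X)$; symmetrically on the other side. Gelfand duality then converts the resulting $C^\ast$-algebra isomorphism $C_{QG}(Y)/C_0(Y)\to C_{QG}(X)/C_0(X)$ into the desired homeomorphism $\partial_{QG}X\to\partial_{QG}Y$. The main obstacle is the continuity of $f\circ\phi$ in the pullback step; the discrete-net reduction via Lemma \ref{DiscreteSubsetLemma} is the technical device that sidesteps it, while the remainder is bookkeeping with the established cone and thread machinery.
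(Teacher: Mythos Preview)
Your proposal is correct and follows essentially the same route as the paper: reduce to discrete nets via Lemma~\ref{DiscreteSubsetLemma} to dodge the continuity problem, then use that a quasi-isometry carries integral quasi-geodesics to integral quasi-geodesics (hence cones into cones) so that pullback preserves geometric slow oscillation. The only packaging difference is that the paper, after passing to $1$-discrete nets, further reduces to the case where the quasi-isometry is a \emph{bijection} (by selecting one point per fiber), at which point $g\mapsto g\circ f$ is visibly an isomorphism of the algebras $C_{QG}$; you instead keep a general quasi-isometry and its quasi-inverse and argue that $\phi^\ast$ and $\psi^\ast$ are mutual inverses modulo $C_0$, using that geometrically slowly-oscillating functions are by definition slowly-oscillating. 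Both finishes are straightforward once the discrete reduction and the cone-inclusion observation are in place.
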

\begin{proof}
By Lemma \ref{DiscreteSubsetLemma}, we may assume $X$ and $Y$ are $1$-discrete and there is a surjective coarse equivalence $f:X\to Y$. Moreover, by picking one point from each fiber of $f$, we may reduce the general case to that of $f$ being a bijection.

Now, $g:Y\to R$ is geometrically slowly-oscillating if and only if $g\circ f$ is geometrically slowly-oscillating. Also,
$a:(a,b)\cap Z_+\to X$ is a quasi-geodesic if and only if
$f\circ a$ is one.
\end{proof}

\begin{Corollary}\label{QIEmbeddingOnQGBoundaries}
Every continuous quasi-isometric embedding $f:X\to Y$ extends to a continuous 
map on geometric quasi-geodesic compactification that restricts to a map of quasi-geometric boundaries.
\end{Corollary}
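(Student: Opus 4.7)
The plan is to invoke Gelfand duality: since $X_{QG}$ is by definition the maximal ideal space of the commutative unital $C^{\ast}$-algebra $C_{QG}(X)$, specifying a continuous map $\tilde f\colon X_{QG}\to Y_{QG}$ amounts to specifying a unital $\ast$-homomorphism $f^{\#}\colon C_{QG}(Y)\to C_{QG}(X)$. The natural candidate is pullback, $f^{\#}(g):=g\circ f$, and the substance of the proof is to check that $g\in C_{QG}(Y)$ actually forces $g\circ f\in C_{QG}(X)$.

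Fix $(q,Q)$ so that $f$ is a $(q,Q)$-quasi-isometric embedding and take $g\in C_{QG}(Y)$. Boundedness and continuity of $g\circ f$ are immediate from the corresponding properties of $g$ and $f$. For ordinary slow-oscillation, suppose $x_n\to\infty$ in $X$ and $d_X(x_n,y_n)<M$; the lower quasi-isometric inequality gives $d_Y(f(x_n),f(x_0))\to\infty$, while the upper inequality gives $d_Y(f(x_n),f(y_n))\le qM+Q$, so slow-oscillation of $g$ yields $|g(f(x_n))-g(f(y_n))|\to 0$.

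The key step is the geometric condition. Fix an integral $(k,k)$-quasi-geodesic ray $a$ in $X$ starting at $x_0$, and choose $K$ large enough (a routine computation using $(q,Q)$ shows $K:=qk+kq+Q$ works) that the composition of \emph{any} integral $(k,k)$-quasi-geodesic in $X$ with $f$ is an integral $(K,K)$-quasi-geodesic in $Y$. Then $f\circ a$ is such a ray based at $f(x_0)$, and whenever $h\in T(x_0,k,k,a,m)$ one has $f\circ h\in T(f(x_0),K,K,f\circ a,m)$. Consequently
\[
f\bigl(C_X(x_0,k,k,a,m)\bigr)\subset C_Y(f(x_0),K,K,f\circ a,m).
\]
Given $\epsilon>0$, apply the geometric slow-oscillation of $g$ to choose $m$ for which the diameter of $g$ on the latter cone is below $\epsilon$; then the diameter of $g\circ f$ on the former cone is also below $\epsilon$, and so $g\circ f\in C_{QG}(X)$.

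Gelfand duality now produces the desired continuous $\tilde f\colon X_{QG}\to Y_{QG}$. It extends $f$ because evaluation at a point $x\in X$ corresponds under Gelfand duality to the evaluation character on $C_{QG}(X)$, and pullback by $f$ sends the evaluation character at $f(x)$ to that at $x$. To see that $\tilde f$ restricts to a map of quasi-geodesic boundaries, note that if $g$ is an integral quasi-geodesic ray in $X$ converging to $p\in\partial_{QG}X$, then $f\circ g$ is an integral quasi-geodesic ray in $Y$ by the quasi-isometric embedding property, and $f\circ g$ converges to $\tilde f(p)$ by continuity; in the proper case, the lower quasi-isometric inequality forces $\tilde f(p)\notin Y$. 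The main obstacle is keeping the cone bookkeeping straight in the geometric slow-oscillation step; everything else is formal.
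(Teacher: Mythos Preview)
Your proof is correct and follows the same approach as the paper: the core observation in both is that pullback along a quasi-isometric embedding preserves geometric slow-oscillation, which via the $C^\ast$-algebra description of $X_{QG}$ yields the desired extension. The paper states this in three sentences and defers the boundary-restriction part to ``a similar argument as in \ref{QGBdAreHomeo}''; you have supplied the details, in particular the cone inclusion $f(C_X(x_0,k,k,a,m))\subset C_Y(f(x_0),K,K,f\circ a,m)$, which is exactly what makes the geometric slow-oscillation step work.
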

\begin{proof}
Given a geometrically slowly-oscillating function $a:Y\to R$,
$a\circ f$ is also geometrically slowly-oscillating. Hence,
$f$ extends to $\tilde f:X_{QG}\to Y_{QG}$.
Now, use a similar argument as in  \ref{QGBdAreHomeo}.
\end{proof}

Given a metric space $(X,d)$, the \textbf{Gromov product} of $x$ and $y$ with respect to $a\in X$ is defined by
$$
\left< x,y\right>_a=\frac{1}{2}\big(d(x,a)+d(y,a)-d(x,y)\big).
$$

Recall that metric space $(X,d)$ is (Gromov) $ \delta-$\textbf{hyperbolic} if it satisfies the $\delta/4$-inequality:
$$
\left< x,y\right>_{a} \geq \min \{\left< x,z\right>_{a},\left< z,y \right> _{a}\}-\delta/4, \quad \forall x,y,z,a\in X.$$

$(X,d)$ is \textbf{Gromov hyperbolic} if it is $ \delta-$hyperbolic for some $\delta > 0$.

\begin{Proposition}\label{HypBdEqualsQGBd}
If $X$ is proper and Gromov hyperbolic, then the Gromov boundary of $X$ equals the quasi-geodesic boundary of $X$.
\end{Proposition}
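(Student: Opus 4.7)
The plan is to construct a homeomorphism $\Phi: \partial_G X \to \partial_{QG}X$ by sending a Gromov boundary point $\xi$ to the limit in $X_{QG}$ of any geodesic ray from a fixed basepoint $x_0$ converging to $\xi$. I would begin by verifying that $\Phi$ is well defined: every geodesic ray converges to a point of $\partial_{QG}X$ by Corollary \ref{PointsViaGeodesicRays}, and two geodesic rays at finite Hausdorff distance converge to the same point by the unnumbered corollary to Lemma \ref{SuffCondForBoundaryPoints}. In a proper $\delta$-hyperbolic space, asymptotic geodesic rays from a common basepoint lie within Hausdorff distance $O(\delta)$ of each other by the standard thin-triangle argument, so $\Phi$ is well defined on $\partial_G X$. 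Surjectivity is then immediate from the second half of Corollary \ref{PointsViaGeodesicRays}.

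For continuity, suppose $\xi_n \to \xi$ in $\partial_G X$ with associated geodesic rays $g_n$ and $g$ from $x_0$. Properness of $X$ combined with Arzel\`a--Ascoli yields a subsequence along which $g_n$ converges uniformly on compacta to a geodesic ray $g'$ from $x_0$, which must also converge to $\xi$ in the Gromov sense. The $g_n$ then boundedly approach $g'$ in the sense of Definition \ref{BoundedlyApproachesDef}, so Proposition \ref{BoundedlyApproachesProp} gives $f(g_n) \to f(g')$ for every $f \in C_{QG}(X)$, hence $\Phi(\xi_n) \to \Phi(\xi)$ in $X_{QG}$ along the subsequence. The standard ``every subsequence has a convergent sub-subsequence with the same limit'' argument, combined with compactness of $\partial_{QG}X$ (Proposition \ref{BdIsMetrizable}), upgrades this to convergence of the full sequence.

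The crucial step is injectivity. If $\xi \neq \eta$ in $\partial_G X$ with geodesic rays $g$ and $h$ from $x_0$ to $\xi$ and $\eta$, then the Gromov products $\langle g(i), h(j)\rangle_{x_0}$ are uniformly bounded, so the rays are coarsely separated beyond some scale. I would separate their images in $\partial_{QG}X$ by constructing a geometrically slowly-oscillating $f:X\to[0,1]$ with $f(g)=1$ and $f(h)=0$. The natural candidate is the restriction to $X$ of a Urysohn function on the Gromov compactification $\overline{X}^G$ taking value $1$ on a small visual neighborhood of $\xi$ and $0$ on a small visual neighborhood of $\eta$. The main obstacle is verifying that this restriction satisfies Definition \ref{GeoSlowOscDef}: given any integral $(k,k)$-quasi-geodesic ray $g''$ starting at $x_0$, the Morse (quasi-geodesic stability) lemma places $g''$ in a uniform neighborhood of a genuine geodesic ray with Gromov limit $\zeta$, and a thin-triangle computation then shows that for all sufficiently large $m$ the entire $m$-cone $C_X(x_0,k,k,g'',m)$ is contained in a shadow of $\zeta$ of arbitrarily small visual diameter. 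Uniform continuity of $f$ on the compact metrizable space $\overline{X}^G$ forces the oscillation of $f$ on this cone to be less than any prescribed $\epsilon$, which is exactly the geometric slow-oscillation condition.

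Once $\Phi$ is a continuous bijection, it is automatically a homeomorphism since $\partial_G X$ and $\partial_{QG}X$ are both compact Hausdorff. The principal technical obstacle throughout is the Morse-lemma step in injectivity: one must package hyperbolic geometry cleanly enough to promote continuous functions on $\overline{X}^G$ to geometrically slowly-oscillating functions on $X$, since without this, distinct Gromov directions could a priori collapse to the same point of $\partial_{QG}X$.
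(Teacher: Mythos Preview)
Your proposal is correct and uses the same two technical ingredients as the paper: the stability of quasi-geodesics (Morse lemma) and Proposition~\ref{BoundedlyApproachesProp}. The organizational difference is that the paper identifies the two compactifications by identifying their function algebras directly---it shows that every geometrically slowly-oscillating $f$ extends continuously over the Gromov compactification (via \ref{BoundedlyApproachesProp}, exactly your continuity step) and conversely that every continuous function on the Gromov compactification restricts to a geometrically slowly-oscillating function on $X$ (via stability of geodesics, exactly your injectivity step). You instead build the boundary map $\Phi$ explicitly and verify it is a continuous bijection between compact Hausdorff spaces. Your route is a bit longer but more self-contained for a reader unfamiliar with the Gelfand/maximal-ideal-space viewpoint; the paper's route is shorter because once the algebras coincide the compactifications are equal by definition, so no separate well-definedness, surjectivity, or homeomorphism argument is needed. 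One small imprecision: you cite Proposition~\ref{BdIsMetrizable} for compactness of $\partial_{QG}X$, but that result is stated for discrete proper spaces; the correct reference is its corollary for general proper integrally quasi-geodesic spaces.
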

\begin{proof}
Recall (see \cite{CS})  that a (quasi-)geodesic $\gamma$ is called \textbf{M--Morse} if for
any constants $K\ge 1$, $L\ge 0$, there is a constant $M = M(K, L)$, such that for every
$(K, L)$-quasi-geodesic $\sigma$ with endpoints on $\gamma$, we have that $\sigma$ is contained in the $M$-neighborhood of $\gamma$.

The fundamental property of Gromov hyperbolic spaces is expressed
in Theorem 1.3.2 of \cite{BS}:\\
  (Stability of geodesics). Let $X$ be a $\delta$-hyperbolic geodesic space, $a\ge 1, b\ge 0$. There exists $H = H(a,b,\delta) > 0$ such that for every $n\in N$ the image of every $(a,b)$-quasi-isometric map $f : {1,...,n}\to X$, $im(f)$, lies in the $H$-neighborhood of any geodesic $c : [0,l]\to X$ with $c(0) = f(1), c(l) = f(n)$, and vice versa, $c$ lies in the $H$-neighborhood of $im(f)$.

What we need to prove is that every continuous geometrically slowly-oscilating function $f:X\to [a,b]$ extends over the Gromov compactification of $X$ and conversely: given a continuous 
function $f$ on the Gromov compactification of $X$, its restriction to $X$ is geometrically slowly-oscillating.

Suppose $f:X\to [a,b]$ is continuous and geometrically slowly oscillating. Given a sequence $\{x_i\}$ in $X$ converging to a point on the Gromov boundary, we need to show $f(x_i)$ converges
to a point in $[a,b]$.
$\{x_i\}$ in $X$ converging to a point on the Gromov boundary means $\lim\limits_{i\to\infty,j\to\infty}\left< x_i,y_j\right>_{p}=\infty$. Moreover, we may assume there is a sequence
of geodesics $g_i$ from $p$ to $x_i$ pointwise converging to
a geodesic ray $g$ at $p$. Apply Lemma \ref{BoundedlyApproachesProp}.

Conversely: given a continuous 
function $f$ on the Gromov compactification of $X$, its restriction to $X$ is geometrically slowly-oscillating.
Indeed, if $g$ is a in integral $(k,k)$-quasi-geodesic ray in $X$ at $x_0$ and $x_m\in C_X(x_0,k,k,g,m)$ for $m\ge 1$,
then applying Stability of geodesics we can see $\lim\limits_{i\to\infty,j\to\infty}\left< x_i,y_j\right>_{p}=\infty$.
Thus, $f(x_m)$ converges to a point and $f$ is geometrically slowly-oscillating.
\end{proof}

\begin{Proposition}\label{MapOfCAT0Space}
If $X$ is a proper CAT(0)-space, then there is a continuous surjection from the compactification of $X$ obtained by adding the visual boundary to the quasi-geodesic compactification of $X$.
\end{Proposition}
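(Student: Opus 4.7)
The plan is to apply Gelfand duality. Since $X_{QG}=\mathrm{Spec}\,C_{QG}(X)$, it suffices to exhibit an injective $\ast$-homomorphism $C_{QG}(X)\hookrightarrow C(X_{vis})$, where $X_{vis}:=X\cup\partial_\infty X$ denotes the visual compactification; the Gelfand dual of such an embedding is a continuous surjection $X_{vis}\twoheadrightarrow X_{QG}$ restricting to the identity on $X$. Concretely, I will show that every $f\in C_{QG}(X)$ extends to a continuous function $\tilde f\colon X_{vis}\to\mathbb{C}$. Because $X$ is dense in $X_{vis}$, the resulting assignment $f\mapsto\tilde f$ is automatically an injective unital $\ast$-homomorphism.

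Fix a basepoint $x_0\in X$. In a proper CAT(0) space each visual boundary point $\xi\in\partial_\infty X$ is represented by a unique geodesic ray $\gamma_\xi\colon[0,\infty)\to X$ with $\gamma_\xi(0)=x_0$, and its integer sampling $g_\xi(i):=\gamma_\xi(i-1)$ is an integral $(1,0)$-quasi-geodesic ray. Define $\tilde f(x):=f(x)$ for $x\in X$ and $\tilde f(\xi):=f(g_\xi)$ for $\xi\in\partial_\infty X$, the latter being well defined because $f\circ g_\xi$ is a Cauchy sequence by the corollary following Example \ref{CoarselyClopenGeoSlo}. For continuity at an interior point of $X_{vis}$ there is nothing to check, and for continuity at $\xi\in\partial_\infty X$, take any sequence $p_k\to\xi$ in $X_{vis}$ and assign to each $p_k$ an integral $(1,1)$-quasi-geodesic $g_{p_k}$: sample $[x_0,p_k]$ at integer times and append the endpoint $p_k$ when $p_k\in X$, or sample $\gamma_{p_k}$ when $p_k\in\partial_\infty X$. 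The classical CAT(0) fact that visual convergence $p_k\to\xi$ is equivalent to pointwise (equivalently, uniform-on-bounded-intervals) convergence of the geodesics to $\gamma_\xi$ implies that $\{g_{p_k}\}$ boundedly approaches $g_\xi$ in the sense of Definition \ref{BoundedlyApproachesDef}. Proposition \ref{BoundedlyApproachesProp} then gives $\tilde f(p_k)=f(g_{p_k})\to f(g_\xi)=\tilde f(\xi)$; since $X_{vis}$ is metrizable for proper CAT(0) $X$, sequential continuity is enough.

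Gelfand duality now produces a continuous map $\Phi\colon X_{vis}\to X_{QG}$ restricting to the identity on $X$. Surjectivity is immediate: $\Phi(X_{vis})$ is compact, hence closed in $X_{QG}$, and contains the dense subset $X$, so it equals $X_{QG}$. Alternatively, one can invoke Corollary \ref{PointsViaGeodesicRays}, which realizes every $p\in\partial_{QG}X$ as the limit of a geodesic ray from $x_0$; that ray determines an asymptotic class $\xi\in\partial_\infty X$ with $\Phi(\xi)=p$.

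The main technical obstacle is translating visual convergence in $X_{vis}$ into the bounded-approach hypothesis of Proposition \ref{BoundedlyApproachesProp}. This rests on the standard CAT(0) fact that convexity of the distance function together with properness (via Arzel\`a--Ascoli) forces the geodesic segments $[x_0,p_k]$ to converge uniformly on bounded intervals to $\gamma_\xi$. Once that translation is in hand, the remainder of the argument is a formal consequence of the universal property of the maximal ideal space.
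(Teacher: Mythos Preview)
Your proof is correct and follows essentially the same route as the paper: both arguments fix a basepoint, send a visual boundary point to the limit of its geodesic ray in $X_{QG}$, translate visual convergence into the bounded-approach condition of Definition~\ref{BoundedlyApproachesDef} via the standard CAT(0) fact that geodesics converge uniformly on bounded intervals, and then invoke Proposition~\ref{BoundedlyApproachesProp}. Your Gelfand-duality packaging and your explicit treatment of boundary-to-boundary sequences are a bit more formal than the paper's direct construction, but the substance is the same.
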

\begin{proof}
Fix $x_0\in X$. Given a point at the visual boundary of $X$ represented by a geodesic ray originating at $x_0$, $g(t)$, $t\ge 0$, converges to a point $[g]$ of the quasi-geodesic boundary of $X$ by
\ref{PointsViaGeodesicRays} and sending $g$ to $[g]$ is a surjection of the boundaries.

Suppose a sequence $y_n$ of points in $X$ converges to a point in the visual boundary represented by a geodesic ray $g$ originating at $x_0$. What it means is that 
$g_n(t)$ converges to $g(t)$ for each $t\ge 0$, where $g_n$ is the geodesic from $x_0$ to $y_n$ for each $n\ge 1$.
Reduce $g$ to a $(1,1)$-quasi-geodesic by considering only the values $g(k)$, $k\in \mathbb{Z}_+$. Similarly, reduce each $g_n$
to a $(1,1)$-quasi-geodesic. Notice $g_n$ boundedly approaches
$g$.

Given any geometrically slowly-oscillating function $f:X\to \mathbb{C}$, \ref{BoundedlyApproachesProp} says
$f(y_n)\to f(g)$. Therefore, $y_n\to [g]$.
\end{proof}

\begin{Remark}
Notice that Croke-Kleiner \cite{CK} examples contain lots of flats (subspaces isometric to the plane) that produce circles in the visual boundary. Therefore, those circles are mapped to points in the quasi-geodesic boundary.
\end{Remark}

\section{Spaces with trivial quasi-geodesic boundary}

\begin{Proposition}\label{QGBdOfProduct}
If $X$ and $Y$ are proper unbounded integral quasi-geodesic spaces, then their cartesian product has trivial quasi-geodesic boundary, i.e. it consists of just one point.
\end{Proposition}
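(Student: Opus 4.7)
The plan is to exhibit two integral quasi-geodesic rays $g_X,g_Y$ in $X\times Y$ based at $(x_0,y_0)$ and to apply Lemma~\ref{SuffCondForBoundaryPoints} twice: once to compare an arbitrary integral quasi-geodesic ray $h$ in $X\times Y$ with $g_X$ or $g_Y$, and once to compare $g_X$ with $g_Y$. Since every boundary point of $X\times Y$ is the limit of such an $h$ by \ref{BdIsMetrizable} and its corollary, this forces $\partial_{QG}(X\times Y)$ to be a single point. By \ref{ProductOfQuasiGeodesicSpaces} and \ref{QGBdAreHomeo} we may work throughout with the $l_1$-metric $d_1$ on $X\times Y$.

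Let $k_X,k_Y\geq 1$ be integral quasi-geodesic constants for $X$ and $Y$. Properness and unboundedness together with \ref{SequenceVsRayLemma} produce integral $(k_X,k_X)$- and $(k_Y,k_Y)$-quasi-geodesic rays $\rho_X,\rho_Y$ in $X,Y$ from $x_0,y_0$. Set $g_X(i):=(\rho_X(i),y_0)$ and $g_Y(i):=(x_0,\rho_Y(i))$. The heart of the argument is a ``cone capture'' claim generalizing \ref{ConeExample}: there exist a uniform constant $K$ and a linear function $T(m)=Cm+C$ such that $(x,y)\in C((x_0,y_0),K,K,g_X,m)$ whenever $d_Y(y_0,y)\geq T(m)$, and symmetrically $(x,y)\in C((x_0,y_0),K,K,g_Y,m)$ whenever $d_X(x_0,x)\geq T(m)$.

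To witness the cone claim for $g_X$, given such $(x,y)$ I would choose integral $(k_Y,k_Y)$- and $(k_X,k_X)$-quasi-geodesics $\beta$ in $Y$ from $y_0$ to $y$ and $\alpha$ in $X$ from $\rho_X(m)$ to $x$, then define $\gamma$ by concatenating three segments: (a) $g_X$ on $[1,m]$, ending at $(\rho_X(m),y_0)$; (b) the $Y$-segment $(\rho_X(m),\beta(\cdot))$ ending at $(\rho_X(m),y)$; (c) the $X$-segment $(\alpha(\cdot),y)$ ending at $(x,y)$. Upper bounds for $\gamma$ follow from the triangle inequality over the segments. For the lower bound $d_1(\gamma(i),\gamma(j))\geq (j-i)/K-K$ the same-segment cases are immediate; for cross-segment pairs, the $l_1$-splitting $d_1=d_X+d_Y$ lets me lower-bound each summand using the quasi-geodesic inequalities for $\rho_X,\alpha,\beta$. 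The delicate case is $i$ in segment (a) and $j$ in segment (c): here $d_1(\gamma(i),\gamma(j))\geq d_Y(y_0,y)$ from the frozen $Y$-coordinate, while the index gap is at most $m+L_\beta+L_\alpha$, itself controlled by $m$ plus constants times $d_Y(y_0,y)+d_X(x_0,x)$. The threshold $T(m)=O(m)$ is exactly what is needed to absorb the $m$-prefix and force the desired inequality with a uniform $K$.

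With the cone capture established, let $h$ be any integral quasi-geodesic ray in $X\times Y$ based at $(x_0,y_0)$. Since $d_1((x_0,y_0),h(n))\to\infty$, a subsequence satisfies $d_Y(y_0,\pi_Y h(n))\to\infty$ or $d_X(x_0,\pi_X h(n))\to\infty$; in the respective case, for every fixed $m$ the cone of $g_X$ (or $g_Y$) at level $m$ contains $h(n)$ for all but finitely many $n$ in the subsequence. Applying \ref{SuffCondForBoundaryPoints} with a common constant $k\geq\max(K,k_X,k_Y,k_h)$ shows $h$ and $g_X$ (or $g_Y$) converge to the same boundary point. The same argument with $h=g_Y$ (whose $Y$-projection tends to infinity) compares $g_X$ with $g_Y$. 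Thus every integral quasi-geodesic ray from $(x_0,y_0)$ shares a single boundary limit, proving $\partial_{QG}(X\times Y)$ is one point. The main obstacle is the case-by-case verification that $\gamma$ is a uniform $(K,K)$-quasi-geodesic across the (a)--(c) boundary: this is where the linear threshold $T(m)$ earns its keep.
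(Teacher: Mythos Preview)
Your argument is correct, and it takes a genuinely different route from the paper's.

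The paper first embeds a quadrant $\mathbb{Z}_+\times\mathbb{Z}_+$ quasi-isometrically into $X\times Y$ via $(t,s)\mapsto (f(t),g(s))$ for any pair of factor quasi-geodesic rays, then invokes Corollary~\ref{QGBdOfPlane} (the quadrant has one-point boundary) together with Corollary~\ref{QIEmbeddingOnQGBoundaries} to conclude that \emph{all} basic rays $t\mapsto(f(t),y_0)$ and $t\mapsto(x_0,g(t))$ hit the same boundary point $p$. For a hypothetical second boundary point $q$, it picks a sequence $(x_n,y_n)\to q$, builds the L-shaped quasi-geodesics of Proposition~\ref{ProductOfQuasiGeodesicSpaces}, passes to a limiting ray via Lemma~\ref{SequenceVsRayLemma}, and observes that this limit is within finite Hausdorff distance of a basic ray (in either the case $x_n\to\infty$ or $x_n$ bounded), contradicting $q\ne p$.

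You instead bypass \ref{QGBdOfPlane}, \ref{QIEmbeddingOnQGBoundaries}, and the Hausdorff-distance step altogether by proving a direct cone-capture estimate: any point with large $Y$-displacement lies in the $m$-cone of $g_X$ (for a uniform $K$), and symmetrically. This is essentially a product-space version of Example~\ref{ConeExample} proved from scratch, and it lets you feed both the comparison $g_X\sim g_Y$ and the comparison $h\sim g_X$ (or $g_Y$) through Lemma~\ref{SuffCondForBoundaryPoints} in one stroke. The delicate cross-segment lower bound you flag does go through: with $D_X'=d_X(\rho_X(i),\alpha(j'))$ and $D_Y=d_Y(y_0,y)$, the $l_1$-distance is exactly $D_X'+D_Y$, while the index gap is bounded by $(1+k_X^2)(m-i)+k_YD_Y+k_XD_X'+C$, so choosing $T(m)\ge(1+k_X^2)m+C$ absorbs the prefix and yields a uniform $K=\max(k_X,k_Y+1)$.

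What each approach buys: the paper's proof is conceptually cleaner, reducing the product to the already-understood quadrant and using functoriality under quasi-isometric embeddings; yours is more self-contained and elementary, needing only Lemma~\ref{SuffCondForBoundaryPoints} and the corollary to Proposition~\ref{BdIsMetrizable}, at the cost of the three-segment case analysis.
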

\begin{proof}
As in \ref{ProductOfQuasiGeodesicSpaces}, we consider $X\times Y$ with the $l_1$-metric.

Pick $x_0\in X$ and $y_0\in Y$. Given any integral $(k,k)$-quasi-geodesics $f$ in $X$ and $g$ in $Y$ originating respectively at $x_0$ and $y_0$, the function 
$h:\mathbb{Z}_+\times \mathbb{Z}_+\to X\times Y$ defined by $h(t,s)=(f(t),g(s))$ is a quasi-isometric embedding of a "quadrant" into $X\times Y$.
As the quasi-geodesic boundary of the integral quadrant is trivial by \ref{QGBdOfPlane},
all basic integral quasi-geodesics in $X\times Y$ converge to the same point $p$ of $\partial_{QG}(X\times Y)$ by \ref{QIEmbeddingOnQGBoundaries}, where by basic integral quasi-geodesic in $X\times Y$ we mean one of the form $t\to (f(t),y_0)$ or $t\to (x_0,g(t))$.

Suppose a sequence of points $(x_n,y_n)$ converges
to $q\ne p$ in  $\partial_{QG}(X\times Y)$. Consider integral $(k,k)$-quasi-geodesics $f_n$ from $(x_0,y_0)$ to $(x_n,y_n)$
as in the proof of \ref{ProductOfQuasiGeodesicSpaces}.
We may assume as in the proof of \ref{SequenceVsRayLemma}
that $f_n$ are boundedly approaching an integral quasi-geodesic ray $f$ in $X\times Y$. If the sequence $\{x_n\}$ is not diverging to infinity, then $f$ is of finite Hausdorff distance from a basic integral quasi-geodesic ray, hence it converges to $q$, a contradiction.
However, if $\{x_n\}$ diverges to infinity, then $f$ is of finite Hausdorff distance from a basic integral quasi-geodesic ray, a contradiction as well.
\end{proof}

The purpose of the next result is to show that the well-known examples of Croke-Kleiner \cite{CK} have the trivial quasi-geodesic boundary. We use a description from \cite{CM}  so that our proof also applies to generalized Croke-Kleiner spaces constructed by Mooney.

\begin{Proposition}\label{CKSpacesQGBoundaries}
Croke-Kleiner spaces have trivial quasi-geodesic boundaries.
\end{Proposition}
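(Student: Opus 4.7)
The plan is to exploit the by-flats description of Croke-Kleiner (and Mooney) spaces given in \cite{CM}: $X$ is a proper CAT(0) space that is the union of a family $\mathcal{F}$ of flats, each isometric to $\RR^2$, and any two flats in $\mathcal{F}$ are joined by a finite chain $F_0,\ldots,F_n\in\mathcal{F}$ in which consecutive $F_i$ and $F_{i+1}$ share a full geodesic line.

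First I would observe that for each $F\in\mathcal{F}$ the inclusion $F\hookrightarrow X$ is an isometric, hence quasi-isometric, embedding. By \ref{QGBdOfPlane}, $\partial_{QG}F$ is a single point, so \ref{QIEmbeddingOnQGBoundaries} produces a continuous map whose image is a single point $p_F\in\partial_{QG}X$; in particular every quasi-geodesic ray inside $F$ converges to $p_F$. If $F,F'\in\mathcal{F}$ share a geodesic line $L$, a geodesic ray in $L$ is a ray of both $F$ and $F'$, so it must converge simultaneously to $p_F$ and $p_{F'}$, forcing $p_F=p_{F'}$. Chain-connectedness of $\mathcal{F}$ then collapses all $p_F$ to a common point $p\in\partial_{QG}X$.

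To conclude I would show $\partial_{QG}X=\{p\}$. By \ref{PointsViaGeodesicRays} it suffices to check that every geodesic ray $g$ in $X$ starting at a base point $x_0$ converges to $p$. Fix a geodesic ray $h$ in the flat $F_0$ containing $x_0$; it converges to $p_{F_0}=p$. By \ref{SuffCondForBoundaryPoints} it is enough to exhibit a uniform $k\ge 1$ such that, for every $m\ge 1$, the cone $C_X(x_0,k,k,h,m)$ contains infinitely many points of $g$. My approach is to use the sequence of flats met by $g$: for each large $i$, concatenate $h|_{[1,m]}$ with an integral quasi-geodesic that runs along the walls shared by consecutive flats traversed by $g$ and then follows $g$ inside the final flat to $g(i)$. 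Since each such flat contributes rays converging to $p$ and the intervening walls fit inside cones of $h$, this witnesses $g(i)\in C_X(x_0,k,k,h,m)$ for arbitrarily large $i$.

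The hard part will be this last rerouting step, since the characteristic ``bad'' geodesic rays in Croke-Kleiner examples cross infinitely many distinct flats (that is precisely why the visual boundary fails to be QI-invariant). I would handle it using the tree-of-flats data from \cite{CM} to organize the walls crossed by $g$ into a bi-infinite sequence, then verify that bounded-distance adjustments between consecutive walls let one splice the resulting chain of flat segments into a single integral $(k,k)$-quasi-geodesic inside a prescribed cone of $h$. Any argument avoiding this splicing would instead have to show directly that every continuous geometrically slowly-oscillating $f\colon X\to\mathbb{C}$ is eventually constant equal to the common value $c_F=c$ of its flat-limits, which ultimately amounts to the same geometric input.
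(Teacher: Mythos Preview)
Your first two paragraphs run parallel to the paper, though with a different decomposition: you use flats $F\cong\RR^2$ chained along shared lines, whereas the paper uses \emph{blocks} $B\cong T\times\RR$ (tree times line) chained along shared walls. In either setup the pieces have one-point quasi-geodesic boundary (\ref{QGBdOfProduct} for the paper, \ref{QGBdOfPlane} for you), and chain-connectedness collapses their images in $\partial_{QG}X$ to a common point $p$.

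The divergence is in the concluding step, and here you have a genuine gap. Your splicing argument---rerouting through consecutive walls so that $g(i)\in C_X(x_0,k,k,h,m)$ for a \emph{uniform} $k$---is only sketched, and you yourself flag it as ``the hard part.'' For the characteristic Croke--Kleiner rays that cross infinitely many blocks, the wall-hopping path from $h(m)$ to $g(i)$ has length growing with the number of blocks traversed while the distance $d(h(m),g(i))$ need not; controlling the quasi-geodesic constants of the spliced path uniformly in $i$ and $m$ is exactly the delicate point, and nothing in your outline supplies that control.

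The paper avoids this entirely by passing through the visual boundary. By \ref{MapOfCAT0Space} there is a \emph{continuous surjection} $p\colon\partial X\to\partial_{QG}X$. Each $\partial B$ is sent to a point, and since adjacent blocks share a wall (so their visual boundaries overlap), all these points coincide. Hence the union $Y=\bigcup_B\partial B$ maps to the single point $p$. The structural input from \cite{CM} is that $Y$ is \emph{dense} in $\partial X$; continuity of $p$ then forces $p(\partial X)=\{p\}$, and surjectivity gives $\partial_{QG}X=\{p\}$. The density statement absorbs precisely the bad rays you were trying to handle by hand, so no cone or splicing analysis is needed.
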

\begin{proof}
By \textbf{Croke-Kleiner spaces} we mean $CAT(0)$-spaces constructed in \cite{CK} 
that are quasi-isometric to the group $\{a,b,c,d | [a,b], [b,c], [c,d]\}$. One of them, call it $X$, can be represented as the universal cover of a union of three tori $T_1, T_2, T_3$ and has the following structure
(see \cite{CM} where \textbf{generalized Croke-Kleiner spaces} are described in a similar way):\\
a. $X$ is the union of collection of closed convex subspaces, called \textbf{blocks} that are isometric to the product of a tree and a line. Hence, the visual boundary $\partial B$ of every block
$B$ is the suspension of a Cantor set and the quasi-geodesic boundary of $B$ is trivial by \ref{QGBdOfProduct}.
The suspension points are called \textbf{poles}. The intersection of two blocks is a Euclidean plane called a \textbf{wall}. \\
b. The nerve $N$ of the collection of blocks is a tree.\\
c. Let $B_0$ and $B_1$ be blocks, and $D$ be the distance between the corresponding vertices in N. Then:\\
(1) If $D = 1$, then $\partial B_0\cap \partial B_1 = \partial W$ where $W$ is the wall $B_0\cap B_1$.\\
(2) If $D=2$, then $\partial B_0\cap \partial B_1$ is the set of poles of $B_{1/2}$, where $B_{1/2}$ intersects $B_0$ and $B_1$.\\
(3) If $D > 2$, then $\partial B_0\cap \partial B_1=\emptyset$.\\
d. The union $Y$ of block boundaries in $\partial X$ is dense in $\partial X$.

Consider the natural map $p:\partial X\to \partial_{QG}X$.
Each $\partial B$ is sent to a point. By (1) above all those points are equal, hence $Y$ is sent to a point. As $Y$ is dense in $\partial X$,
the image of $\partial X$ is a point.
\end{proof}

\section{Sublinear quasi-geodesic boundaries}

In coarse theory (see \cite{Grom}, \cite{Roe lectures}, \cite{DK} or \cite{BH}) the most prominent boundary of a metric space is the Higson corona.
However, there is another coarse boundary, namely the sublinear Higson corona (see \cite{DS}  or \cite{CDSV}) which is usually smaller than Higson corona. In this section we introduce boundaries of geodesic spaces related to the sublinear Higson corona. Potentially, they may be related to the sublinear Morse boundaries of Tiozzo-Qing-Rafi
\cite{TQR1}  and \cite{TQR2}.

\begin{Definition}
As in \cite{TQR2} (see Section 2.1) a \textbf{sublinear function}
is a function $\kappa:[0,\infty)\to [1,\infty)$ such that
$$\lim\limits_{t\to\infty}\frac{\kappa(t)}{t}=0.$$
\end{Definition}
Notice that we should not quibble about the domain of $\kappa$.
It suffices it contains $[a,\infty)$ for some $a\in R$.

Sublinear functions are essentially equivalent to \textbf{asymptotically sublinear functions} in the terminology of \cite{DS} or \cite{CDSV},
i.e. functions $s:R_+\to R_+$ such that for each non-constant linear function $f:R_+\to R_+$ there is $r > 0$ so that
$s(x)\leq f(x)$ for all $x > r$.

The sublinear Higson corona of a metric space is defined
in \cite{DS} via a coarse structure  ( see also \cite{CDSV}).
We are going to introduce it in analogy to slow-oscillating functions:
\begin{Definition}\label{SublinearlySlowOsc}
Given a metric space $X$ and given $x_0\in X$ define
$||x||$ as $d(x,x_0)$. A function $f:X\to \mathbb{C}$
is \textbf{sublinearly slowly-oscillating} if, whenever $\kappa$ is a sublinear function and $\epsilon > 0$,
there is a bounded subset $B$ of $X$ such that
$x,y\in X\setminus B$
and $d(x,y)\leq \kappa(||x||)$,
then $|f(x)-f(y)|\le \epsilon$.

The \textbf{sublinear Higson corona} of $X$ is defined analogously to the Higson corona but, instead of slowly-oscillating function, one uses sublinearly slowly-oscillating functions.
\end{Definition}

Since we want to create a concept analogous to $\kappa$-boundaries of \cite{TQR2}, we will create a larger classes of functions than in
\ref{SublinearlySlowOsc}:
\begin{Definition}
Given a metric space $X$, $x_0\in X$, and a sublinear function $\kappa$, declare a function $f:X\to \mathbb{C}$
to be \textbf{$\kappa$-slowly-oscillating} if, whenever $C, \epsilon > 0$,
there is a bounded subset $B$ of $X$ such that
$x,y\in X\setminus B$
and $d(x,y)\leq C\kappa(||x||)$,
then $|f(x)-f(y)|\le \epsilon$. 
\end{Definition}

\begin{Definition}
Given a proper metric space $X$ and a sublinear function $\kappa$
define the \textbf{$\kappa$-quasi-geodesic compactification}
$X_{\kappa QG}$ of $X$ to be the compactification induced
by all bounded continuous functions $f:X\to \mathbb{C}$ that are
geometrically slowly-oscillating and $\kappa$-slowly-oscillating.
\end{Definition}

Notice there is a continuous extension $X_{QG}\to X_{\kappa QG}$
of the identity map $id_X:X\to X$,
so $X_{\kappa QG}$ is compact metrizable if $X$ is integrally quasi-geodesic.

\begin{Definition}
Given a proper metric space $X$
define the \textbf{sublinear quasi-geodesic compactification}
$X_{sQG}$ of $X$ to be the compactification induced
by all bounded continuous functions $f:X\to \mathbb{C}$ that are
geometrically slowly-oscillating and  sublinearly slowly-oscillating.
\end{Definition}

Again, notice there is a continuous extension $X_{QG}\to X_{sQG}$
of the identity map $id_X:X\to X$,
so $X_{sQG}$ is compact metrizable if $X$ is integrally quasi-geodesic.

Tiozzo-Qing-Rafi \cite{TQR2} (see Definition 3.3) define
two quasi-geodesic rays $g$ and $h$ based at $x_0$ to 
\textbf{sublinearly
track each other} if
$$\lim\limits_{t\to\infty}\frac{d_X(g(t),h(t))}{t}=0.$$
In other words, $\kappa(t):=d_X(g(t),h(t))$ is a sublinear function.

That leads to an equivalence relation $g\sim h$ on the space of
quasi-geodesics in $X$ based at $x_0$ and those equivalence classes
represent points in the boundaries constructed in \cite{TQR2}.
We can only say that each equivalence class maps to a single point in the sublinear quasi-geometric boundary of $X$.

In the case of $\kappa$-quasi-geodesic boundary we can define
$\kappa$-tracking in two ways and they are equivalent:

\begin{Lemma}\label{ConcaveEquivalenceOfTwoWaysToTrack}
Suppose $X$ is a metric space and $g,h$ are two integral quasi-geodesics in $X$. Given a sublinear concave and increasing function $\kappa$, the following two conditions are equivalent:\\
1. There is a constant $C \ge 1$
such that $d_X(g(t),h(t))\leq C\cdot \kappa(||g(t)||)$ for sufficiently large $t$. \\
2. There is a constant $M > 0$
such that $d_X(g(t),h(t))\leq M\cdot \kappa(t)$ for sufficiently large $t$. 
\end{Lemma}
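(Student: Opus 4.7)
The plan is to use the quasi-geodesic inequality for $g$ to sandwich $\|g(t)\|:=d_X(g(t),x_0)$ between two linear functions of $t$ (for $t$ large), and then exploit concavity of $\kappa$ to show that multiplying the input of $\kappa$ by a bounded constant only multiplies the output by a bounded constant. Since both conditions permit passing to ``sufficiently large $t$,'' I can also absorb the additive constants coming from the quasi-geodesic inequality.

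First I would fix a constant $k\ge 1$ so that both $g$ and $h$ are integral $(k,k)$-quasi-geodesics (take the maximum of their individual constants). Applying the quasi-geodesic inequality to $g(t)$ and $g(1)$, and moving the basepoint over by the bounded quantity $d_X(x_0,g(1))$, one obtains constants $k'\ge 1$ and $t_0\ge 1$ such that
\[
\tfrac{1}{k'}\,t \;\le\; \|g(t)\| \;\le\; k'\,t \qquad \text{for all } t\ge t_0.
\]
Thus $\|g(t)\|$ and $t$ are comparable up to the multiplicative constant $k'$ once $t$ is large.

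The main workhorse is the concavity identity: since $\kappa$ is concave with $\kappa(0)\ge 1>0$, for every $\lambda\ge 1$ and every $t>0$ we have
\[
\kappa(\lambda t) \;\le\; \lambda\,\kappa(t).
\]
This follows by writing $t=(1/\lambda)(\lambda t)+(1-1/\lambda)\cdot 0$ and using concavity together with $\kappa(0)\ge 0$; the proof is a one-line computation I would include. Since $\kappa$ is also increasing, this is the only property I need.

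With these tools in hand, the implication $(1)\Rightarrow(2)$ follows by estimating, for $t\ge t_0$,
\[
d_X(g(t),h(t)) \;\le\; C\,\kappa(\|g(t)\|) \;\le\; C\,\kappa(k' t) \;\le\; C k'\,\kappa(t),
\]
so $M:=Ck'$ works. Conversely, for $(2)\Rightarrow(1)$, I would use $t\le k'\|g(t)\|$ (for $t\ge t_0$) together with the same concavity estimate:
\[
d_X(g(t),h(t)) \;\le\; M\,\kappa(t) \;\le\; M\,\kappa(k'\|g(t)\|) \;\le\; Mk'\,\kappa(\|g(t)\|),
\]
so $C:=\max(1,Mk')$ works.

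The only real subtlety is verifying the concavity-scaling identity $\kappa(\lambda t)\le \lambda\kappa(t)$ for $\lambda\ge 1$; once that is in place, both directions are short symmetric computations using the quasi-geodesic linear bounds on $\|g(t)\|$ versus $t$. I do not expect any genuine obstacle; the hypothesis that $\kappa$ is concave and increasing exists precisely to make these two formulations interchangeable, and the role of ``sufficiently large $t$'' is to absorb the additive constants from the quasi-geodesic inequality into multiplicative ones.
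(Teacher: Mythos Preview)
Your proposal is correct and follows essentially the same approach as the paper: both arguments use the concavity-scaling inequality $\kappa(\lambda t)\le \lambda\,\kappa(t)$ for $\lambda\ge 1$ together with the two-sided linear bounds on $\|g(t)\|$ coming from the quasi-geodesic condition. The only cosmetic difference is that the paper cites the scaling inequality from \cite{TQR2} while you derive it directly from concavity and $\kappa(0)\ge 1$.
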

\begin{proof}
Notice (see Lemma 2.4 in \cite{TQR2}), $\kappa$
satisfies $\kappa(\lambda\cdot t)\leq \lambda\cdot \kappa(t)$ for all $\lambda \ge 1$. 

1)$\implies$2). Since both $||g(t)||$ and $||h(t)||$ are bounded
by some linear function $m\cdot t+b\leq (m+b)\cdot t$ for $t\ge 1$, where $m,b > 1/C$, $\kappa(C\cdot ||g(t)||)\leq (m+b)\cdot C\cdot \kappa(t)$ for $t\ge 1$.
 
2)$\implies$1). Since $||g(t)||$ is at least
 $(2/m)\cdot t-b\ge (1/m)\cdot t$ for $t\ge m\cdot b$, where $m,b > 1$, $m\cdot \kappa(||g(t)||)\ge \kappa(m\cdot ||g(t)||)\ge \kappa(t)$ for $t\ge b\cdot m$.
\end{proof}

\begin{Lemma}\label{EquivalenceOfTwoWaysToTrack}
Suppose $X$ is a metric space and $g,h$ are two integral quasi-geodesics in $X$. Given a sublinear function $\kappa$, the following two conditions are equivalent:\\
1. There is a constant $C \ge 1$
such that $d_X(g(t),h(t))\leq C\cdot \kappa(C\cdot ||g(t)||)$ for sufficiently large $t$. \\
2. There is a constant $C \ge 1$ and a constant $K \ge 1$
such that $d_X(g(t),h(t))\leq C\cdot \kappa(K\cdot ||g(t)||)$ for sufficiently large $t$. \\
3. There is a constant $M > 0$
such that $d_X(g(t),h(t))\leq M\cdot \kappa(t)$ for sufficiently large $t$. 
\end{Lemma}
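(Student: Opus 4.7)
The plan is to verify the cycle $1\Rightarrow 2\Rightarrow 3\Rightarrow 1$, following the structure of the proof of Lemma \ref{ConcaveEquivalenceOfTwoWaysToTrack}. The implication $1\Rightarrow 2$ is immediate with $K=C$. For $3\Rightarrow 1$ I would use the lower bound $||g(t)||\geq (t-1)/q-Q$ coming from the integral quasi-geodesic axiom to get $t\leq 2q\,||g(t)||$ for $t$ large, and then monotonicity of $\kappa$ to conclude $\kappa(t)\leq \kappa(2q\,||g(t)||)$; combining this with the hypothesis $d(g(t),h(t))\leq M\cdot\kappa(t)$ and enlarging the constant to absorb $M$ yields condition 1 with $C=\max(M,2q)$. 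For $2\Rightarrow 3$ I would use the upper bound $||g(t)||\leq qt+Q\leq 2qt$ for large $t$, so that $K\,||g(t)||\leq 2qK\cdot t$; then the inequality $\kappa(\lambda t)\leq\lambda\cdot\kappa(t)$ for $\lambda\geq 1$ (valid for concave $\kappa$, cf.\ Lemma 2.4 of \cite{TQR2}) gives $\kappa(2qKt)\leq 2qK\cdot\kappa(t)$, so condition 3 holds with $M=2qKC$.

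The main obstacle, relative to Lemma \ref{ConcaveEquivalenceOfTwoWaysToTrack}, is that the present lemma no longer assumes $\kappa$ is concave or monotone, so the two ingredients just invoked---monotonicity in $3\Rightarrow 1$ and the submultiplicative inequality $\kappa(\lambda t)\leq\lambda\cdot\kappa(t)$ in $2\Rightarrow 3$---are not directly available. My plan to overcome this is the familiar device of replacing $\kappa$ with an equivalent concave, increasing sublinear majorant $\hat\kappa$: first take the monotone envelope $\tilde\kappa(t):=\sup_{s\in[0,t]}\kappa(s)$, which remains sublinear because $\kappa(s)/s\to 0$ (so for each $\epsilon>0$ the supremum is eventually dominated by $\epsilon t$), and then take its concave upper envelope as in Lemma 2.4 of \cite{TQR2}. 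Since $\kappa\leq\hat\kappa$, every bound of the form $d(g(t),h(t))\leq C\cdot\kappa(\cdots)$ immediately upgrades to a bound with $\hat\kappa$ in place of $\kappa$, so Lemma \ref{ConcaveEquivalenceOfTwoWaysToTrack} applies to $\hat\kappa$.

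The remaining (and most delicate) step is to pull the conclusions back to $\kappa$ itself. Here I would exploit that, for the values of the form $||g(t)||$ with $g$ integrally quasi-geodesic, the argument of $\kappa$ ranges over a cofinal linearly growing set, so the bumps in $\kappa$ that $\hat\kappa$ smooths out can be absorbed into the multiplicative constants $C,K,M$ by tracking them through the calculations of the first paragraph. In effect, the triple of conditions 1, 2, 3 is invariant under passing between $\kappa$ and $\hat\kappa$ up to such constants, and the lemma reduces to its concave predecessor \ref{ConcaveEquivalenceOfTwoWaysToTrack}.
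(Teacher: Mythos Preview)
Your strategy is exactly the paper's: pass to a concave increasing surrogate for $\kappa$, apply Lemma~\ref{ConcaveEquivalenceOfTwoWaysToTrack} to that surrogate, and then transfer the conclusions back to $\kappa$. The first two stages are fine and match the paper verbatim (including $1\Rightarrow 2$ being trivial).

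The gap is in your ``most delicate step.'' You only record the one-sided inequality $\kappa\le\hat\kappa$, and then try to argue informally that the discrepancy between $\hat\kappa$ and $\kappa$ is absorbed by the constants because $\|g(t)\|$ grows linearly. That hand-wave does not work in general: a sublinear $\kappa\ge 1$ can oscillate wildly (e.g.\ equal $1$ off a sparse set where it spikes), so that $\hat\kappa(s)/\kappa(s)$ is unbounded at the very arguments $s=\|g(t)\|$ or $s=t$ you need. No amount of ``cofinal linearly growing set'' reasoning recovers a uniform constant from that.

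What the paper does instead is cite Remark~2.3 of \cite{TQR2}, which produces a concave increasing $\bar\kappa$ together with a \emph{two-sided} estimate $\kappa\le\bar\kappa\le Q\cdot\kappa$ for some $Q\ge 1$. With that upper bound in hand the pull-back is a one-liner: from $d_X(g(t),h(t))\le M\,\bar\kappa(t)$ you get $\le MQ\,\kappa(t)$, and from $d_X(g(t),h(t))\le C\,\bar\kappa(\|g(t)\|)$ you get $\le CQ\,\kappa(\|g(t)\|)$. So your ``delicate step'' is not delicate at all once you know (or prove) $\bar\kappa\le Q\cdot\kappa$; that inequality is the single missing ingredient in your proposal.
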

\begin{proof}
See Remark 2.3 in \cite{TQR2}),
where it is shown that there exists a constant $Q\ge 1$ and an increasing and concave sublinear function $\bar\kappa$ such that
$\kappa(t)\le \bar\kappa(t)\le Q\cdot \kappa(t)$.

1)$\implies$2) is obvious.\\
2)$\implies$3). By \ref{ConcaveEquivalenceOfTwoWaysToTrack},
there is a constant $M > 0$
such that $d_X(g(t),h(t))\leq M\cdot \bar\kappa(t)\leq M\cdot Q\cdot \kappa(t)$ for sufficiently large $t$. 
 
3)$\implies$1). By \ref{ConcaveEquivalenceOfTwoWaysToTrack},
there is a constant $C \ge 1$
such that $d_X(g(t),h(t))\leq C\cdot \bar\kappa(||g(t)||)
\leq Q\cdot C\cdot \kappa(||g(t)||)$ for sufficiently large $t$. 
\end{proof}

Now, we can define two quasi-geodesic rays $g$ and $h$ based at $x_0$ to 
\textbf{$\kappa$-track each other} if there is a constant $C > 1$
such that $d_X(g(t),h(t))\leq C\cdot \kappa(||g(t)||)$ for sufficiently large $t$. The resulting equivalence classes (use \ref{EquivalenceOfTwoWaysToTrack}) map to points of the $\kappa$-quasi-geodesic boundary of $X$.

In the case of Gromov hyperbolic spaces all sublinear quasi-geodesic boundaries are equal to the Gromov boundary:

\begin{Proposition}\label{HypBdEqualssQGBd}
If $X$ is proper and Gromov hyperbolic, then the Gromov boundary of $X$ equals the sublinear quasi-geodesic boundary of $X$.
\end{Proposition}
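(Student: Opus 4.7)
The plan is to show that the identity $X \to X$ extends to a homeomorphism between the Gromov compactification $X_G$ and the sublinear quasi-geodesic compactification $X_{sQG}$. By Proposition \ref{HypBdEqualsQGBd} we already know $X_{QG} = X_G$. Moreover, every continuous function defining $X_{sQG}$ is by definition geometrically slowly-oscillating, so there is a canonical continuous map $X_G = X_{QG}\to X_{sQG}$ extending the identity. It therefore suffices to verify the reverse: every continuous $f:X_G\to \mathbb{C}$ restricts to a function on $X$ that is \emph{both} geometrically slowly-oscillating and sublinearly slowly-oscillating. The first property is given by Proposition \ref{HypBdEqualsQGBd}, so only the second has to be checked.

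Fix a continuous $f:X_G\to \mathbb{C}$, a sublinear function $\kappa$, and $\epsilon>0$. Assume toward contradiction that no bounded $B\subset X$ witnesses the required inequality; then there exist sequences $x_n,y_n\in X$ with $\|x_n\|:=d(x_n,x_0)\to\infty$, $d(x_n,y_n)\leq \kappa(\|x_n\|)$, and $|f(x_n)-f(y_n)|>\epsilon$. The key observation is that the Gromov products diverge: since $\|y_n\|\ge \|x_n\|-\kappa(\|x_n\|)$, we have
$$\langle x_n,y_n\rangle_{x_0}=\tfrac{1}{2}\bigl(\|x_n\|+\|y_n\|-d(x_n,y_n)\bigr)\ge \|x_n\|-\kappa(\|x_n\|),$$
and the right-hand side tends to $\infty$ because $\kappa(t)/t\to 0$.

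Using compactness of $X_G$, pass to a subsequence so that $x_n\to \xi$ and $y_n\to \eta$ in $X_G$; since $\|x_n\|,\|y_n\|\to\infty$ both limits lie in $\partial_G X$. The standard extension of the Gromov product to the Gromov boundary of a $\delta$-hyperbolic space gives $\langle \xi,\eta\rangle_{x_0}\ge \liminf_{n\to\infty}\langle x_n,y_n\rangle_{x_0}-2\delta=\infty$, which forces $\xi=\eta$ in $\partial_G X$. By continuity of $f$ on $X_G$, $f(x_n)\to f(\xi)=f(\eta)$ and $f(y_n)\to f(\eta)$, contradicting $|f(x_n)-f(y_n)|>\epsilon$. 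This proves $f|_X$ is sublinearly slowly-oscillating and completes the identification $X_{sQG}=X_G$, hence $\partial_{sQG}X=\partial_G X$.

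The main obstacle is the last compactness-plus-boundary-Gromov-product step: one must either quote, or reprove from the $\delta/4$-inequality, that divergence of $\langle x_n,y_n\rangle_{x_0}$ to $\infty$ together with $x_n\to\xi$, $y_n\to\eta$ in $X_G$ forces $\xi=\eta$. In a proper $\delta$-hyperbolic space this is standard, and the sublinearity of $\kappa$ is exactly what supplies the divergence of Gromov products starting from the distance bound $d(x_n,y_n)\leq \kappa(\|x_n\|)$.
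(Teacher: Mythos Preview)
Your argument is correct and its core computation---that $d(x_n,y_n)\le\kappa(\|x_n\|)$ with $\|x_n\|\to\infty$ forces $\langle x_n,y_n\rangle_{x_0}\to\infty$, hence $f(x_n)-f(y_n)\to 0$ for any continuous $f$ on the Gromov compactification---is exactly the content of the paper's second paragraph. Your reduction via Proposition~\ref{HypBdEqualsQGBd} to the single remaining check (sublinear slow oscillation) is clean and is implicit in the paper as well.

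The only structural difference is that the paper opens with a separate ray-level statement: if two geodesic rays $\kappa$-track each other then they are in fact at bounded distance, proved by the same Gromov-product estimate applied to $g(q),h(q)$. This is aimed at the stronger claim announced just before the proposition, namely that each $\kappa$-quasi-geodesic boundary $\partial_{\kappa QG}X$ (not only the full sublinear one) coincides with the Gromov boundary. Your function-algebra argument actually yields this too once one notes that the same Gromov-product computation shows every continuous function on $X_G$ is $\kappa$-slowly-oscillating for each fixed $\kappa$; but you do not state that explicitly. If you want to match the paper's scope, add one sentence observing that the estimate $\langle x_n,y_n\rangle_{x_0}\ge \|x_n\|-C\kappa(\|x_n\|)\to\infty$ works for any fixed sublinear $\kappa$ and any constant $C$, so the conclusion holds for every $\partial_{\kappa QG}X$ as well.
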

\begin{proof}
Let $\kappa$  be a sublinear function and suppose two geodesic rays $g$ and $h$ based at $x_0$ 
$\kappa$-track each other. We need to show that $g$ and $h$ are within bounded distance. Given $M > 0$ we plan to show that $d(g(t),h(t))\leq \delta$ for all $t\leq M$. For that
it suffices to find $q > 0$ such that $\langle g(q),h(q)\rangle_{x_0} > M$. Pick a constant $C \ge 1$
such that $d_X(g(t),h(t))\leq C\cdot \kappa(C\cdot ||g(t)||)$ for sufficiently large $t$ (see \ref{EquivalenceOfTwoWaysToTrack}).
Now, we can find $q$ such that $||g(q)||=||h(q)||\ge 4M$ and
$C\cdot \kappa(C\cdot ||g(q)||)\leq  ||g(q)|| $.
Consequently, $2\cdot\langle g(q),h(q)\rangle_{x_0}
=||g(q)+||h(q)||-d(g(q),h(q)) > 4M  > 2\cdot M$.

Similarly, one can show that if $x_n\to\infty$ and $d(x_n,y_n)\leq C\cdot \kappa(||x_n||)$ for each $n\ge 1$, then 
$\langle x_n,y_n\rangle_{x_0}\to\infty$, so any continuous complex-valued
function $f$ on the Gromov compactification of $X$ restricts to
a $\kappa$-slowly-oscillating function on $X$.
\end{proof}

\section{Quasi-geodesic ends of spaces}\label{EndsOfQGeoSpaces}

In \cite{DR} the authors constructed a theory of ends of spaces via linear algebra. It makes sense to compare ends of $X$ using its quasi-geodesic boundary to the coarse ends of $X$.

\begin{Proposition}\label{CoarseEndsVsQGBoundary}
a.  If $X$ is proper metric, then the space of coarse ends of $X$ embeds into the space of components of the quasi-geodesic boundary of $X$.\\
b. If $X$ is proper metric and quasi-geodesic, then the space of components of the quasi-geodesic boundary of $X$ is identical with the space of coarse ends of $X$.
\end{Proposition}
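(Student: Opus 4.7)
My plan is to construct a natural map $\phi\colon\mathrm{Ends}(X)\to\pi_0(\partial_{QG}X)$ and show it is injective for (a) and additionally surjective under the quasi-geodesic hypothesis for (b). Fix $x_0\in X$. Each coarse end $e$ is represented by a nested family $U_1(e)\supseteq U_2(e)\supseteq\cdots$ of unbounded connected components, where $U_n(e)$ is a component of $X\setminus\overline{B(x_0,n)}$. Set $E(e):=\bigcap_{n\ge 1}\overline{U_n(e)}^{\,X_{QG}}$, the closure taken in $X_{QG}$. Because every $y\in U_n(e)$ satisfies $d(x_0,y)>n$, $\overline{U_n(e)}^{\,X}\subseteq \{x\in X:d(x_0,x)\ge n\}$, so $E(e)\cap X=\emptyset$ and $E(e)\subseteq \partial_{QG}X$. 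Since $U_n(e)$ is connected, each $\overline{U_n(e)}^{\,X_{QG}}$ is non-empty, compact, and connected, and a decreasing intersection of such sets in a compact Hausdorff space is non-empty and connected. Define $\phi(e)$ to be the unique connected component of $\partial_{QG}X$ containing $E(e)$.

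For injectivity in (a), given $e_1\ne e_2$, pick $N$ with $U_N(e_1)\ne U_N(e_2)$. The key technical step is to produce a continuous $f\in C_{QG}(X)$ with $f\equiv 1$ on $U_N(e_1)$ outside a small collar of $\partial U_N(e_1)$ and $f\equiv 0$ on the complement outside the same collar, the collar lying in $\overline{B(x_0,N+\delta)}$ for a small $\delta$. To see $f\in C_{QG}(X)$, note that an integral $(k,k)$-quasi-geodesic has consecutive step size at most $2k$. Once a ray $g$ lies more than $2k$ past the collar, any $h\in T(x_0,k,k,g,m)$ starts with $h(m)=g(m)$ and inductively cannot cross $\partial U_N(e_1)$ at any later step; hence $f$ is constant along the entire $m$-thread. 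Because $f$ takes values in $\{0,1\}$ outside a bounded set, its continuous extension to $X_{QG}$ takes values in $\{0,1\}$ on $\partial_{QG}X$, giving a clopen decomposition that separates $E(e_1)$ from $E(e_2)$; hence $\phi(e_1)\ne\phi(e_2)$.

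For surjectivity in (b), assume $X$ is integrally quasi-geodesic. Given a component $C$ of $\partial_{QG}X$ and $p\in C$, use Corollary \ref{PointsViaGeodesicRays} (or its integral analogue) to choose an integral quasi-geodesic ray $g$ from $x_0$ with $g(i)\to p$. In a proper, integrally quasi-geodesic space, distinct unbounded components $U,V$ of $X\setminus\overline{B(x_0,n)}$ are coarsely separated: an integral quasi-geodesic from $u\in U$ to $v\in V$ must pass through $\overline{B(x_0,n)}$ and has integer length bounded in terms of $d(u,v)$ and the quasi-geodesic constants, giving some $R_n$ with $d(u,v)>2k$ whenever $u\in U$, $v\in V$, and $d(x_0,u)>R_n$. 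Applied to $g$, this forces $g$ to stabilize in one component of $X\setminus\overline{B(x_0,n)}$ for each $n$, yielding a well-defined end $e$, and $p\in\overline{U_n(e)}^{\,X_{QG}}$ for every $n$; so $p\in E(e)\subseteq \phi(e)$, whence $\phi(e)=C$.

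The main obstacle is the construction and verification of $f$ in (a). Geometric slow-oscillation is formulated via threads --- simultaneous control of all quasi-geodesics extending a common initial segment --- so one cannot argue quasi-geodesic by quasi-geodesic. The argument rests on two points: the topological boundary $\partial U_N(e_1)$ lies in the bounded set $\overline{B(x_0,N)}$, and integral $(k,k)$-quasi-geodesics have step size at most $2k$; together these prevent any thread from hopping components after a sufficiently large starting index.
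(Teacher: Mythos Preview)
Your proof rests on identifying coarse ends with nested sequences of unbounded \emph{connected} components of $X\setminus\overline{B(x_0,n)}$. That is the definition of Freudenthal (topological) ends, not of coarse ends, and the two do not agree for general proper metric spaces. Take $X=[0,\infty)\times\{0,1\}\subset\mathbb{R}^2$: it is proper and integrally quasi-geodesic, has two topological ends, but only one coarse end (the two rays are at Hausdorff distance $1$). In this example your step-size argument in (a) fails outright: an integral $(2,2)$-quasi-geodesic can jump from $(m,0)$ to $(m+1,1)$ without coming near $\overline{B(x_0,N)}$, so the indicator-type function $f$ you build is \emph{not} geometrically slowly oscillating, and your map $\phi$ would send two distinct ``ends'' to the same component of $\partial_{QG}X$. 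The same example breaks the separation claim in (b): distinct unbounded connected components need not be coarsely separated, and an integral quasi-geodesic between them need not pass through $\overline{B(x_0,n)}$.

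The paper identifies coarse ends with components of the Higson corona (via \cite{DR}), equivalently with coarsely clopen decompositions of $X$. With that definition your construction in (a) becomes correct and is precisely Example~\ref{CoarselyClopenGeoSlo}, which is what the paper invokes. For (b) the paper takes a shorter, more functorial route than your ray-tracking: by Observation~\ref{GeoSloOscAreSlowOscInGeodesicSpaces} every geometrically slowly-oscillating function on an integrally quasi-geodesic $X$ is slowly oscillating, hence $C_{QG}(X)\subset C_h(X)$, giving a continuous surjection from the Higson compactification onto $X_{QG}$ and therefore a surjection on corona components. Your argument for (b) can be repaired by replacing connected components with coarsely clopen pieces, but as written it relies on an unjustified passage-through-the-ball claim.
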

\begin{proof}
a. The space of coarse ends of $X$ is identical with the space of components of its Higson corona (see \cite{DR}). Those components can be identified using slowly oscillating functions $f:X\to [0,1]$ such that when extended over the Higson compactification of $X$ there are only two values on a neighborhood of the Higson corona. That means $f$ has properties as in \ref{CoarselyClopenGeoSlo}, so it is geometrically slowly oscillating and it extends over the quasi-geodesic boundary so that it induces a map of its components.

b. By \ref{GeoSloOscAreSlowOscInGeodesicSpaces}
any geometrically slowly oscillating function $f:X\to [0,1]$
is slowly osillating, so it extends over the quasi-geodesic compactification of $X$. That means there is a continuous extension
of $id_X$ from the Higson compactification $h(X)$ of $X$ onto
the quasi-geodesic compactification of $X$. In particular, the space of coarse ends of $X$ maps onto the space of ends of the quasi-geodesic boundary of $X$.
\end{proof}

\begin{Corollary}\label{QGBdOfCAT0Spaces}
If $X$ is a proper CAT(0)-space with totally disconnected visual boundary, then the quasi-geodesic boundary of $X$ is identical with the visual boundary of $X$.
\end{Corollary}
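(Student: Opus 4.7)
The plan is to combine the continuous surjection $\pi \colon X \cup \partial_\infty X \to X_{QG}$ from Proposition~\ref{MapOfCAT0Space} with the identification $\pi_0(\partial_{QG} X) = E(X)$ supplied by Proposition~\ref{CoarseEndsVsQGBoundary}(b), and show that the restriction $\pi \colon \partial_\infty X \to \partial_{QG} X$ is injective. Since $\pi$ is already continuous and surjective between compact Hausdorff spaces, injectivity alone will force it to be a homeomorphism, which is the content of the statement.

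First I would note that a proper CAT(0)-space is proper geodesic, hence integrally quasi-geodesic, so Proposition~\ref{CoarseEndsVsQGBoundary}(b) applies and gives $\pi_0(\partial_{QG} X) = E(X)$, the space of coarse ends of $X$. Then I would invoke the standard fact for proper CAT(0)-spaces that sending a point $p \in \partial_\infty X$ to the coarse end determined by any geodesic ray to $p$ induces a bijection $\pi_0(\partial_\infty X) \xrightarrow{\cong} E(X)$. The totally-disconnectedness assumption gives $\pi_0(\partial_\infty X) = \partial_\infty X$, so composing yields a canonical bijection $\varphi \colon \partial_\infty X \xrightarrow{\cong} \pi_0(\partial_{QG} X)$.

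Let $q \colon \partial_{QG} X \to \pi_0(\partial_{QG} X)$ be the quotient onto components. I would then verify $\varphi = q \circ \pi$: for $p \in \partial_\infty X$, the composite sends $p$ to the component of $\partial_{QG} X$ containing $\pi(p) = \lim_{t \to \infty} g_p(t)$ for any geodesic ray $g_p$ to $p$; under the identification in Proposition~\ref{CoarseEndsVsQGBoundary} this component is precisely the coarse end of $g_p$, which under the standard identification is $\varphi(p)$. Since $q \circ \pi$ is a bijection, $\pi$ must be injective, which finishes the argument. The resulting homeomorphism $X \cup \partial_\infty X \to X_{QG}$ identifies the two boundaries.

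The main obstacle is justifying the CAT(0) fact $\pi_0(\partial_\infty X) \cong E(X)$. The easy direction (a connected subset of $\partial_\infty X$ lies over a single coarse end) is immediate from continuity into a totally disconnected target. The harder direction (distinct components of $\partial_\infty X$ give distinct coarse ends) should fall out of CAT(0) closest-point projections: a clopen partition $\partial_\infty X = A \sqcup B$ yields visual subcones from a basepoint whose traces separate outside a sufficiently large ball, witnessing a non-trivial end. If a clean reference is elusive, I would sidestep this fact and instead argue injectivity of $\pi$ directly by using such a clopen partition, Urysohn on $X \cup \partial_\infty X$, and the CAT(0) projection to produce a geometrically slowly-oscillating function on $X$ separating $\pi(p)$ from $\pi(q)$.
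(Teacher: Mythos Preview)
Your proposal is correct and follows essentially the same route as the paper: use the surjection $\partial_\infty X \to \partial_{QG} X$ from Proposition~\ref{MapOfCAT0Space}, combine it with the identification of $\pi_0(\partial_{QG} X)$ with coarse ends from Proposition~\ref{CoarseEndsVsQGBoundary}(b), and use the fact that a totally disconnected CAT(0) visual boundary is already the space of coarse ends to force the surjection to be a bijection (hence a homeomorphism between compact Hausdorff spaces). The only difference is bookkeeping: the paper dispatches the ``main obstacle'' you flagged---that $\partial_\infty X \cong E(X)$ when $\partial_\infty X$ is totally disconnected---by citing \cite{DR}, whereas you spell out the factorization $\varphi = q \circ \pi$ and sketch a direct CAT(0) argument as a fallback.
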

\begin{proof}
In that case the visual boundary of $X$ can be identified with coarse ends of $X$ (see \cite{DR}). Since there is a natural surjection from the visual boundary of $X$ onto $\partial_{QG} X$ (see \ref{MapOfCAT0Space}), by \ref{CoarseEndsVsQGBoundary} that map is a homeomorphism.
\end{proof}

\begin{Example}
The sublinear quasi-geodesic boundary of $X$ may have less ends than the quasi-geodesic boundary of $X$:\\
Let $a(x):=\sqrt{x}$. Consider the set of all points on the plane
of the form $(n,0)$ or $(0,n)$, where $n\ge 0$ is an integer.

Define the distance from $(x,0)$ to $(0,x)$ to be $a(x)$ and 
extend it to the distance between $(x,0)$ to $(0,y)$, $x< y$,
to be $a(y)+y-x$. If $x > y$, we define it to be $a(x)+x-y$.

Both quasi-geodesic rays are different in the quasi-geodesic boundary but are the same in the sublinear quasi-geodesic boundary.

However, if we add bridges from $(n^2,0)$ to $(0,n^2)$ of length $n$ and exyend the metric naturally, then both
quasi-geodesic rays are identical in the quasi-geodesic boundary.
\end{Example}

We do not know of a proper geodesic space $X$ such that its space sublinear quasi-geodesic ends, i.e. the space of components of the sublinear quasi-geodesic boundary of $X$, is smaller than the space of coarse ends of $X$.

\end{document}